\titleformat{\section}[block]
 {\bfseries}
 {\thesection.}
 {\fontdimen2\font}
 {}
\newtheorem{theorem}{Theorem}[section]
\newtheorem{corollary}[theorem]{Corollary}
\newtheorem{proposition}[theorem]{Proposition}
\theoremstyle{definition}
\newtheorem{remark}[theorem]{Remark}
\DeclareMathOperator{\N}{\mathbb{N}}
\DeclareMathOperator{\R}{\mathbb{R}}
\DeclareMathOperator{\B}{\mathbb{B}}
\DeclareMathOperator{\s}{\mathbb{S}}
\DeclareMathOperator{\uhr}{\upharpoonright} 
\DeclareMathOperator{\st}{st} 
\DeclareMathOperator\smap{\leadsto}
\DeclareMathOperator\card{Card}
\DeclareMathOperator\conv{conv}
\DeclareMathOperator\coz{coz}
\newcommand{\embed}[1]{\stackrel{#1}{\hookrightarrow}}
\DeclareMathOperator\car{car}
\DeclareMathOperator\supp{supp}
\renewcommand{\emptyset}{\varnothing}
\numberwithin{equation}{section}
\begin{document}

\author{Valentin Gutev}

\address{Department of Mathematics, Faculty of Science, University of
   Malta, Msida MSD 2080, Malta}
\email{valentin.gutev@um.edu.mt}

\subjclass[2010]{54C60, 54C65, 54F35, 54F45}

\keywords{Lower semi-continuous mapping, lower locally constant
  mapping, continuous selection, local connectedness in finite
  dimension, finite-dimensional space.}

\title[Constructing Selections Stepwise Over Skeletons of Nerves]{Constructing
  Selections Stepwise Over Skeletons of Nerves of Covers}

\begin{abstract}
  It is given a simplified and self-contained proof of the classical
  Michael's finite-dimensional selection theorem. The proof is based on
  approximate selections constructed stepwise over skeletons of nerves
  of covers. The method is also applied to simplify the proof of the
  Schepin--Brodsky's generalisation of this theorem.
\end{abstract}

\date{\today}
\maketitle

\section{Introduction}

All spaces in this paper are Hausdorff topological spaces. We will use
$\Phi:X\smap Y$ to designate that $\Phi$ is a map from $X$ to the
nonempty subsets of $Y$, i.e.\ a \emph{set-valued mapping}. Such a
mapping is \emph{lower semi-continuous}, or l.s.c., if the set \[
\Phi^{-1}[U]=\{x\in X:\Phi(x)\cap U\neq \emptyset\}
\]
is open in $X$, for every open $U\subset Y$. Also, let us recall that a
map $f:X\to Y$ is a \emph{selection} for $\Phi:X\smap Y$ if $f(x)\in
\Phi(x)$, for all $x\in X$.\medskip

Let $n\geq-1$. A family $\mathscr{S}$ of subsets of a space $Y$ is
\emph{equi-$LC^n$} \cite{michael:56b} if every neighbourhood
$U$ of a point $y\in \bigcup\mathscr{S}$ contains a neighbourhood $V$
of $y$ such that for every $S\in \mathscr{S}$, every continuous map
$g:\s^k\to V\cap S$ of the $k$-sphere $\s^k$, $k\leq n$, can be
extended to a continuous map $h:\B^{k+1}\to U\cap S$ of the
$(k+1)$-ball $\B^{k+1}$.  A space $S$ is called $C^n$ if for every
$k\leq n$, every continuous map $g:\s^k\to S$ can be extended to a
continuous map $h:\B^{k+1}\to S$. In these terms, a family
$\mathscr{S}$ of subsets of $Y$ is equi-$LC^{-1}$ if it consists of
nonempty subsets; similarly, each nonempty subset $S\subset Y$ is
$C^{-1}$.\medskip

Let $\mathscr{F}(Y)$ be the collection of all nonempty closed subsets
of a space $Y$. The following theorem was proved by Ernest Michael,
see \cite[Theorem 1.2]{michael:56b}, and is commonly called the
\emph{finite-dimensional selection theorem}.

\begin{theorem}
  \label{theorem-st-app-v10:1}
  Let $X$ be a paracompact space with $\dim(X)\leq n+1$, $Y$ be a
  com\-pletely metrizable space, and $\mathscr{S}\subset \mathscr{F}(Y)$
  be an equi-$LC^n$ family such that each $S\in \mathscr{S}$ is
  $C^n$. Then each l.s.c.\ mapping $\Phi:X\to \mathscr{S}$ has a
  continuous selection.
\end{theorem}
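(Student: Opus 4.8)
The plan is to realise the selection as the uniform limit of a sequence of continuous \emph{approximate selections}. Fix a complete metric $d\le 1$ on $Y$. I will produce continuous maps $f_k\colon X\to Y$ and reals $\epsilon_k\downarrow 0$ with $d(f_k(x),\Phi(x))<\epsilon_k$ for all $x\in X$ and with $\sum_k\sup_x d\bigl(f_{k+1}(x),f_k(x)\bigr)<\infty$; completeness of $(Y,d)$ then gives a continuous limit $f$, and, each $\Phi(x)$ being closed, $d(f(x),\Phi(x))=0$ forces $f(x)\in\Phi(x)$. Thus everything reduces to a ``one-step'' construction: from a continuous $\epsilon$-selection $g$ of $\Phi$ --- and, for the very first map, from nothing at all --- produce a continuous $\epsilon'$-selection $g'$ with $\epsilon'$ small relative to $\epsilon$ and with $\sup_x d(g'(x),g(x))$ bounded by a multiple of $\epsilon$.

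For the one step I would start from lower semi-continuity of $\Phi$ together with continuity of $g$: for each $x$ pick $y_x\in\Phi(x)$ close to $g(x)$ and a neighbourhood $O_x$ of $x$ on which $g$ varies by less than a prescribed amount and on which $\Phi(\cdot)$ meets an arbitrarily small ball about $y_x$; then on $O_x$ the single point $y_x$ lies within $\epsilon+o(1)$ of $g$ and within $o(1)$ of $\Phi$. By paracompactness and $\dim(X)\le n+1$, take a locally finite open refinement $\mathscr{U}$ of $\{O_x:x\in X\}$ of order $\le n+2$, so that the nerve $N(\mathscr{U})$ has dimension $\le n+1$; attach to each $U\in\mathscr{U}$ the constant $c_U:=y_{x_U}$, where $U\subset O_{x_U}$, and fix a locally finite partition of unity subordinated to $\mathscr{U}$, giving the canonical map $\kappa\colon X\to|N(\mathscr{U})|$; here $\kappa(x)$ lies in the simplex spanned by those $U$ with $\kappa_U(x)>0$, all of which contain $x$.

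The core of the proof is to define a continuous $\phi\colon|N(\mathscr{U})|\to Y$ by induction on the skeleta and then set $g'=\phi\circ\kappa$. On the $0$-skeleton put $\phi(v_U)=c_U$; given $\phi$ on the boundary $\partial\sigma\cong\s^{k-1}$ of a $k$-simplex $\sigma$ ($1\le k\le n+1$), pick $x_\sigma$ in the intersection of the members of $\sigma$ and extend $\phi$ across $|\sigma|$ into a member $S\in\mathscr{S}$ chosen near $\Phi(x_\sigma)$. The hypothesis that each member is $C^n$ guarantees (via $k-1\le n$) that the relevant maps of $\s^{k-1}$ into $S$ are null-homotopic in $S$, hence extend at all, while equi-$LC^n$ supplies the quantitative control: a boundary map already landing in a sufficiently small subset of $S$ extends so that $|\sigma|$ lands in a prescribed, only slightly larger subset. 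Two facts make the estimate work. First, $\phi(|\sigma|)$ is compact, so lower semi-continuity of $\Phi$ at $x_\sigma$ is \emph{uniform} over it: it produces a neighbourhood of $x_\sigma$ throughout which $\Phi$ stays $\epsilon'$-close to every point of $\phi(|\sigma|)$, and if $\mathscr{U}$ is fine enough this neighbourhood engulfs the intersection of the members of $\sigma$. Second, since $\dim N(\mathscr{U})\le n+1$ only finitely many extensions are chained, so only a bounded number of equi-$LC^n$ enlargements accumulate. Evaluating $g'=\phi\circ\kappa$ over the simplex containing $\kappa(x)$ then yields $d(g'(x),\Phi(x))<\epsilon'$, and a parallel estimate against the (fixed) values of $g$ gives $d(g'(x),g(x))=O(\epsilon)$.

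I expect the real obstacle to be the radius bookkeeping inside this skeletal induction together with its reconciliation with the choice of members $S\in\mathscr{S}$. One must iterate equi-$LC^n$ to manufacture a finite decreasing chain of neighbourhoods in $Y$, make $\mathscr{U}$ fine enough that the boundary of every simplex already sits in the first link of the appropriate chain, and match the member attached to a simplex with those attached to its faces --- adjacent simplices have their points $x_\sigma$ close, so lower semi-continuity keeps the members mutually close in the relevant one-sided sense, but turning this into a usable estimate while keeping $\phi$ continuous at the shared vertices is delicate. All of this has to be arranged so that the new scale $\epsilon'$ is a genuinely smaller number and not merely a bound of the same order as $\epsilon$, and so that the per-step errors stay summable over the infinitely many stages; and the first approximate selection must be built with no predecessor to anchor it, which is exactly where $C^n$, rather than equi-$LC^n$, carries the weight.
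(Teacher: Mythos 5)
Your overall strategy is the paper's own: build continuous $\varepsilon$-approximate selections over the nerve of a fine locally finite cover of order $\le n+2$, extending skeleton by skeleton, and pass to a uniform limit via a Cauchy argument (this is exactly Proposition \ref{proposition-st-app-v15:3} combined with Theorems \ref{theorem-st-app-v9:1} and \ref{theorem-st-app-v17:1}). But the sketch has a genuine gap at the step you yourself flag as ``delicate'', and it is not mere bookkeeping. When you extend $\phi$ over a $k$-simplex $|\sigma|$ ``into a member $S\in\mathscr{S}$ chosen near $\Phi(x_\sigma)$'', the boundary map $\phi\uhr\partial\sigma$ does \emph{not} land in $S$: it was built from points of $\Phi(x_\tau)$ for the faces $\tau$ and from earlier extensions into \emph{other} members, so it lands only in a neighbourhood $\mathbf{O}_\lambda(S)$. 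Both $C^n$ and equi-$LC^n$ only extend maps of spheres that already lie \emph{in} $S$, so the extension you invoke does not typecheck. Bridging ``near $S$'' to ``in $S$'' is itself a selection problem one dimension down: one must homotope the boundary sphere into $S$ by a controlled homotopy manufactured from the equi-$LC^i$ data for $i<k$, and only then contract inside $S$. This is precisely the content of the paper's Theorem \ref{theorem-st-app-vgg-rev:2} and Corollaries \ref{corollary-st-app-v7:2} and \ref{corollary-st-app-v8:1}, which establish relations of the form $\mathbf{O}_{\delta_k(\varepsilon)}(S)\embed{k}\mathbf{O}_\varepsilon(T)$; without some version of them the skeletal induction cannot close, and the quantitative control you need ($\varepsilon'$ genuinely smaller, errors summable) cannot even be formulated.

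A second, smaller omission: with only equi-$LC^n$ the modulus $\delta$ depends on the point $y\in Y$, so your uniform estimates (``$\epsilon'$ small relative to $\epsilon$'', ``bounded by a multiple of $\epsilon$'', summability over infinitely many stages) are not available. The paper first invokes the Dugundji--Michael embedding of $\bigcup\mathscr{S}$ into a Banach space making $\mathscr{S}$ \emph{uniformly} equi-$LC^n$; some such reduction (or a local-to-global argument replacing it) must appear in your proof before any global radius bookkeeping can begin.
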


The original proof of Theorem \ref{theorem-st-app-v10:1} in
\cite{michael:56b} takes up most of that paper, and is accomplished in
6 steps. Other proofs of this theorem can be found in the monograph
\cite{repovs-semenov:98}, and the book
\cite{zbMATH00193669}. Actually, in \cite{repovs-semenov:98} are given
two different approaches to obtain the theorem --- the one which
follows the original Michael's proof, and another one based on
filtrations \cite{schepin-brodsky:96}. Other proofs were given by
other authors, see e.g. \cite{MR2192951} and \cite{gutev:05}.
However, what all these proofs have in common is that they may somehow
discourage the casual reader and make Theorem
\ref{theorem-st-app-v10:1} not so accessible to wider audience. The
main purpose of this paper is to fill in this gap, and present a
simplified and self-contained proof of this theorem. \medskip

The paper is organised as follows. The next section contains a brief
review of canonical maps and partitions of unity, which is essential
for the proper understanding of any of the available proofs of Theorem
\ref{theorem-st-app-v10:1}. In this regard, let us explicitly remark
that these considerations were not made readily available in previous
proofs, so they are now included to make the exposition
self-contained. The essential preparation for the proof of Theorem
\ref{theorem-st-app-v10:1} starts in Section
\ref{sec:asph-sequ-select}, which contains a selection theorem for
finite aspherical sequences of lower locally constant mappings
(Theorem \ref{theorem-nerves-v2:5}). This theorem is similar to a
theorem of Uspenskij, see \cite[Theorem 1.3]{uspenskij:98}, and
represents a relaxed version of another theorem proved by the author,
see \cite[Theorem 3.1]{gutev:05}. Section \ref{sec:gener-asph-sequ}
contains several simple constructions of finite aspherical sequences
of sets providing the main interface between such sequences of sets
and the property of equi-$LC^n$. Finally, the proof of Theorem
\ref{theorem-st-app-v10:1} is accomplished in Section
\ref{sec:select-lcn-valu}. It is based on two constructions which are
also present in Michael's proof. The one, Proposition
\ref{proposition-st-app-v15:2}, relates l.s.c.\ mappings to lower
locally constant mappings; the other --- Proposition
\ref{proposition-st-app-v15:3}, relates selections for lower locally
constant mappings to approximate selections for l.s.c.\
mappings. These constructions are applied together with Theorem
\ref{theorem-nerves-v2:5} to deal with two selection properties of
l.s.c.\ equi-$LC^n$-valued mappings, see Theorems
\ref{theorem-st-app-v9:1} and \ref{theorem-st-app-v17:1}. The
proof of Theorem \ref{theorem-st-app-v10:1} is then obtained as an
immediate consequence of these properties.

\section{Canonical maps and partitions of unity}
\label{sec:canon-maps-part}

The \emph{cozero set}, or the \emph{set-theoretic support}, of a
function $\xi:X\to \R$ is the set $\coz(\xi)=\{x\in X:\xi(x)\neq 0\}$.
A collection $\xi_a:X\to [0,1]$, $a\in \mathscr{A}$, of continuous
functions on a space $X$ is a \emph{partition of unity} if
$\sum_{a\in \mathscr{A}}\xi_a(x)=1$, for each $x\in X$.  Here,
``$\sum_{a\in \mathscr{A}}\xi_a(x)=1$'' means that only countably many
functions $\xi_a$'s do not vanish at $x$, and the series composed by
them is convergent to 1.  For a cover $\mathscr{U}$ of a space $X$, a
partition of unity $\{\xi_U:U\in \mathscr{U}\}$ on $X$ is
\emph{index-subordinated} to $\mathscr{U}$ if $\coz(\xi_U)\subset U$,
for each $U\in \mathscr{U}$, see Remark
\ref{remark-st-app-vgg-rev:1}. The following theorem is well known, it
is a consequence of Urysohn's characterisation of normality
\cite{MR1512258} and the Lefschetz lemma \cite{MR0007093}.

\begin{theorem}
  \label{theorem-st-app-v3:1}
  Every locally finite open cover of a normal space has an
  index-subordinated partition of unity.
\end{theorem}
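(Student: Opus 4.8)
The plan is to combine the two quoted ingredients in the textbook manner: shrink the cover, separate each shrunk set from the relevant complement by a Urysohn function, and then normalise. Concretely, I would first apply the Lefschetz lemma (the shrinking lemma): since $X$ is normal and $\mathscr{U}$ is locally finite, hence point-finite, there is an open cover $\{V_U:U\in\mathscr{U}\}$ of $X$, indexed by the same set, with $\overline{V_U}\subset U$ for every $U\in\mathscr{U}$.

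Next, for each $U\in\mathscr{U}$ the closed sets $\overline{V_U}$ and $X\setminus U$ are disjoint, so Urysohn's lemma provides a continuous map $\varphi_U:X\to[0,1]$ with $\varphi_U\equiv 1$ on $\overline{V_U}$ and $\varphi_U\equiv 0$ on $X\setminus U$; in particular $\coz(\varphi_U)\subset U$. I would then form $s=\sum_{U\in\mathscr{U}}\varphi_U$. Since the family $\{\coz(\varphi_U):U\in\mathscr{U}\}$ refines $\mathscr{U}$ it is again locally finite, so near each point only finitely many summands are nonzero; thus $s$ is a well-defined continuous function. Furthermore every $x\in X$ belongs to some $V_U$, where $\varphi_U(x)=1$, so $s(x)\geq 1$. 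Setting $\xi_U=\varphi_U/s$ gives continuous functions $\xi_U:X\to[0,1]$ with $\sum_{U\in\mathscr{U}}\xi_U\equiv 1$ and $\coz(\xi_U)=\coz(\varphi_U)\subset U$, i.e.\ an index-subordinated partition of unity for $\mathscr{U}$.

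The routine points are the verification that $s$ is continuous and bounded away from $0$, and that dividing by $s$ preserves the cozero containments; these are immediate from local finiteness. The genuine content sits entirely in the two cited results, and if the shrinking lemma is not taken as given, the main additional obstacle would be proving it --- a transfinite (Zorn's-lemma) induction over $\mathscr{U}$ using normality at each step to replace a member $U$ by an open set whose closure still lies in $U$ while keeping the family a cover.
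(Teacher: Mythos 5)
Your proof is correct and follows exactly the route the paper indicates (the paper states this theorem without proof, citing Urysohn's lemma and the Lefschetz shrinking lemma as the two ingredients): shrink the cover, separate with Urysohn functions, sum and normalise. One small point of wording: the family $\{\coz(\varphi_U):U\in\mathscr{U}\}$ is locally finite not merely because it \emph{refines} $\mathscr{U}$ (an arbitrary refinement of a locally finite cover need not be locally finite), but because it is an index-preserving shrinking, i.e.\ $\coz(\varphi_U)\subset U$ for the \emph{same} index $U$, so any neighbourhood meeting only finitely many members of $\mathscr{U}$ meets only the corresponding cozero sets.
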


A partition of unity $\{\xi_a:a\in \mathscr{A}\}$ on a space $X$ is
called \emph{locally finite} if $\{\coz(\xi_a):a\in \mathscr{A}\}$ is
a locally finite cover of $X$. Complementary to Theorem
\ref{theorem-st-app-v3:1} is the following important property of
partitions of unity; it follows from a construction of M. Mather, see
\cite[Lemma]{MR0281155} and \cite[Lemma 5.1.8]{engelking:89}.

\begin{theorem}
  \label{theorem-st-app-vgg-rev:1}
  If a cover $\mathscr{U}$ of a space $X$ has an index-subordinated
  partition of unity, then $\mathscr{U}$ also has an
  index-subordinated locally finite partition of unity. 
\end{theorem}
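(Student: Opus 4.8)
The plan is to use M. Mather's renormalisation trick, which manufactures the required locally finite partition of unity directly from the given one. So fix an index-subordinated partition of unity $\{\xi_U : U \in \mathscr{U}\}$ on $X$, so that $\coz(\xi_U) \subseteq U$ for each $U \in \mathscr{U}$, and put
\[
M(x) = \sup\{\xi_U(x) : U \in \mathscr{U}\}, \qquad x \in X .
\]
Since $\sum_{U \in \mathscr{U}} \xi_U(x) = 1$ and the $\xi_U$ are nonnegative, for each $x$ this supremum is attained, it is strictly positive, and only finitely many $\xi_U$ exceed a given positive number at $x$.

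The first---and, I expect, the only genuinely delicate---step is a local uniformity statement: for every $x_0 \in X$ and every $\delta > 0$ there are an open neighbourhood $W$ of $x_0$ and a finite $\mathscr{U}_0 \subseteq \mathscr{U}$ with $\xi_U(x) < \delta$ for all $x \in W$ and all $U \in \mathscr{U} \setminus \mathscr{U}_0$. To obtain it I would choose $\mathscr{U}_0$ finite with $\sum_{U \in \mathscr{U}_0} \xi_U(x_0) > 1 - \delta$ and set $W = \{x \in X : \sum_{U \in \mathscr{U}_0} \xi_U(x) > 1 - \delta\}$, which is open and contains $x_0$; then on $W$ one has $\sum_{U \notin \mathscr{U}_0} \xi_U(x) < \delta$, so $\xi_U(x) < \delta$ for each single $U \notin \mathscr{U}_0$. (This is, in effect, the continuity of $x \mapsto (\xi_U(x))_U$ into $\ell^1(\mathscr{U})$, and it is the heart of the matter.) From it I would deduce that $M$ is continuous: it is lower semi-continuous as a supremum of continuous functions, and it is upper semi-continuous at $x_0$ because on a small enough neighbourhood of $x_0$ the values $\xi_U(x)$ are below a prescribed $\delta$ for $U \notin \mathscr{U}_0$ and are close to $\xi_U(x_0) \le M(x_0)$ for the finitely many $U \in \mathscr{U}_0$.

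Next I would set $\theta_U(x) = \max\{0,\, 2\xi_U(x) - M(x)\}$, a continuous function with
\[
\coz(\theta_U) = \{x \in X : \xi_U(x) > \tfrac12 M(x)\} \subseteq \coz(\xi_U) \subseteq U .
\]
The family $\{\coz(\theta_U) : U \in \mathscr{U}\}$ is locally finite: near a point $x_0$ we have $M(x) > \tfrac12 M(x_0)$, so $x \in \coz(\theta_U)$ forces $\xi_U(x) > \tfrac14 M(x_0)$, and by the local uniformity statement with $\delta = \tfrac14 M(x_0)$ this happens, on a suitable neighbourhood of $x_0$, only for $U$ in a fixed finite subfamily.

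Finally, $\Theta = \sum_{U \in \mathscr{U}} \theta_U$ is, near every point, a finite sum of continuous functions (by the local finiteness just shown), hence continuous, and it is everywhere positive since $\theta_{U^*}(x) = M(x) > 0$ whenever $\xi_{U^*}(x) = M(x)$. Hence $\eta_U = \theta_U / \Theta$ defines continuous functions on $X$ with $\sum_{U} \eta_U \equiv 1$, with $\coz(\eta_U) = \coz(\theta_U) \subseteq U$, and with $\{\coz(\eta_U) : U \in \mathscr{U}\}$ locally finite and covering $X$ (the latter because $\Theta > 0$). That is, $\{\eta_U : U \in \mathscr{U}\}$ is an index-subordinated locally finite partition of unity for $\mathscr{U}$. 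The one real obstacle is the local uniformity statement of the second paragraph; the continuity of $M$, the local finiteness of the sets $\coz(\theta_U)$, and the continuity and positivity of $\Theta$ are all routine consequences of it.
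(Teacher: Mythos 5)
Your proof is correct and is precisely the construction the paper relies on: it cites M.\ Mather's renormalisation argument (via De Marco--Wilson and Engelking, and again in Remark \ref{remark-st-app-vgg-rev:1}) rather than writing it out, and your $\theta_U=\max\{0,\,2\xi_U-M\}$ trick together with the $\ell_1$-type local uniformity lemma is exactly that argument, carried out with all the details in order.
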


By a \emph{simplicial complex} we mean a collection $\Sigma$ of
nonempty finite subsets of a set $S$ such that $\tau\in \Sigma$,
whenever $\emptyset\neq \tau\subset \sigma\in \Sigma$. The set
$\bigcup\Sigma $ is the \emph{vertex set} of $\Sigma$, while each
element of $\Sigma$ is called a \emph{simplex}. The
\emph{$k$-skeleton} $\Sigma^k$ of $\Sigma$ ($k\geq 0$) is the
simplicial complex
$\Sigma^{k}=\{\sigma\in \Sigma:\card(\sigma)\leq k+1\}$, where
$\card(\sigma)$ is the cardinality of $\sigma$. In the sequel, for
simplicity, we will identify the vertex set of $\Sigma$ with its
$0$-skeleton $\Sigma^0$.  In these terms, a \emph{simplicial map}
$g:\Sigma_1\to \Sigma_2$ is a map $g:\Sigma_1^0\to \Sigma_2^0$ between
the vertices of simplicial complexes $\Sigma_1$ and $\Sigma_2$ such
that $g(\sigma)\in \Sigma_2$, for each $\sigma\in \Sigma_1$. If
$g:\Sigma_1\to \Sigma_2$ is a simplicial map and
$g:\Sigma_1^0\to \Sigma_2^0$ is bijective, then the inverse $g^{-1}$
is also a simplicial map, and we say that $g$ is a \emph{simplicial
  isomorphism}.\medskip

The set $\Sigma_S$ of all nonempty finite subsets of a set $S$ is a
simplicial complex. Another natural example is the \emph{nerve}
$\mathscr{N}(\mathscr{U})$ of a cover $\mathscr{U}$ of a set $X$,
which is the subcomplex of $\Sigma_\mathscr{U}$ defined by
\begin{equation}
  \label{eq:st-app-vgg-rev:8}
\mathscr{N}(\mathscr{U})= \left\{\sigma\in
  \Sigma_\mathscr{U}:\bigcap\sigma\neq\emptyset\right\}.  
\end{equation}
The $k$-skeleton of $\mathscr{N}(\mathscr{U})$ is denoted by
$\mathscr{N}^k(\mathscr{U})$, and the vertex set
$\mathscr{N}^0(\mathscr{U})$ of $\mathscr{N}(\mathscr{U})$ is actually
$\mathscr{U}$ because we can always assume that
$\emptyset\notin \mathscr{U}$.\medskip

For a set $\mathscr{A}$, let $\ell_1 (\mathscr{A})$ be the linear
space of all functions $y:\mathscr{A}\to \R$ with
${\sum _{a \in \mathscr{A}} |y (a)| < \infty}$. In fact,
$\ell_1(\mathscr{A})$ is a Banach space when equipped with the norm
$\|y\|_1 = \sum _{a\in \mathscr{A}} |y (a)| $, but this will play no
role in the paper.  The vertex set $\Sigma^0$ of a simplicial complex
$\Sigma$ is a linearly independent subset of $\ell_1(\Sigma^0)$, where
each $v\in \Sigma^0$ is identified with its characteristic function
$v:\Sigma^0\to \{0,1\}$, namely with the function $v(u)=0$ for
$u\neq v$, and $v(v)=1$.  Then to each $\sigma\in \Sigma$ one can
associate the \emph{geometric simplex} $|\sigma|=\conv(\sigma)$, which
is the convex hull of $\sigma$. Thus, $|\sigma|$ is a
\emph{$k$-dimensional simplex} if and only if
$\hbox{Card}(\sigma)=k+1$. The set
$|\Sigma|=\bigcup_{\sigma\in \Sigma}|\sigma|\subset\ell_1(\Sigma^0)$
is called the \emph{geometric realisation} of $\Sigma$. As a
topological space, we will consider $|\Sigma|$ endowed with the
\emph{Whitehead topology} \cite{MR1576810,MR0030759}. In this
topology, a subset $U\subset |\Sigma|$ is open if and only if
$U\cap |\sigma|$ is open in $|\sigma|$, for every $\sigma\in \Sigma$.
Let us explicitly remark that the Whitehead topology on $|\Sigma|$ is
not necessarily the subspace topology on $|\Sigma|$ as a subset of the
Banach space $\ell_1\left(\Sigma^0\right)$. However, both topologies
coincide on each geometric simplex $|\sigma|$, for $\sigma\in
\Sigma$. \medskip

If $p\in |\sigma|$ for some $\sigma\in \Sigma$, then $p$ is both an
element $p\in \ell_1(\Sigma^0)$ and a unique convex
combination of the elements of
$\sigma\subset \Sigma^0\subset
\ell_1(\Sigma^0)$. Hence,
the geometric realisation $|\Sigma|$ is the set of all
$p\in\ell_1(\Sigma^0)$ such that
\begin{equation}
  \label{eq:st-app-vgg-rev:1}
  p(v)\geq 0,\ v\in \Sigma^0,\quad\text{and}\quad
  \coz(p)=\left\{v\in \Sigma^0: p(v)>
    0\right\}\in \Sigma.
\end{equation}
Here, $p(v)$ is called the $v$-th \emph{barycentric} (or
\emph{affine}) \emph{coordinate of} ${p\in |\Sigma|}$, while the
simplex $\coz(p)\in \Sigma$ is called the \emph{carrier} of $p$, and
denoted by $\car(p)=\coz(p)$. Since the representation
$p= \sum_{v\in \car(p)}p(v)\cdot v$ is unique, the carrier $\car(p)$
is the minimal simplex of $\Sigma$ with the property that
$p\in|\car(p)|$.\medskip

To each vertex ${v}\in \Sigma^0$, we can now associate the function
$\alpha_v:|\Sigma|\to [0,1]$, defined by
\begin{equation}
  \label{eq:st-app-vgg-rev:3}
  \alpha_v(p)=p(v),\quad \text{for every $p\in
    |\Sigma|$.} 
\end{equation}
  It is
called the $v$-th \emph{barycentric coordinate function} and
is continuous being affine on each simplex $|\sigma|$, for
$\sigma\in \Sigma$. The cozero
set $\coz(\alpha_v)$ of $\alpha_v$ is called the
\emph{open star} of the vertex $v\in \Sigma^0$, and
denoted by
\begin{equation}
  \label{eq:st-app-v4:3}
  \st\langle v\rangle
  =\big\{p\in |\Sigma|: \alpha_v(p)>0\big\}.
\end{equation}
Clearly, the open star $\st\langle v\rangle$ is open in
$|\Sigma|$ because $\alpha_v$ is continuous.  The following
proposition is an immediate consequence of \eqref{eq:st-app-vgg-rev:1},
\eqref{eq:st-app-vgg-rev:3} and \eqref{eq:st-app-v4:3}.

\begin{proposition}
  \label{proposition-st-app-v2:1}
  If $\Sigma$ is a simplicial complex, then the collection
  $\left\{\alpha_v:v\in \Sigma^0\right\}$ is a
  partition of unity on $|\Sigma|$ with
  $\coz(\alpha_v)=\st\langle v\rangle$, for each
  $v\in \Sigma^0$.
\end{proposition}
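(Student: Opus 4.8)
The plan is to verify directly the two defining requirements of a partition of unity, since the continuity of each $\alpha_v$ has already been recorded in the text (being affine, hence continuous, on every geometric simplex $|\sigma|$, it is continuous for the Whitehead topology).

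First I would fix an arbitrary $p\in |\Sigma|$ and invoke the description \eqref{eq:st-app-vgg-rev:1}: the carrier $\car(p)=\coz(p)$ is a finite simplex of $\Sigma$, and $p=\sum_{v\in \car(p)}p(v)\cdot v$ is the unique representation of $p$ as a convex combination of vertices. Reading off the barycentric coordinates against the identification of each vertex $v$ with its characteristic function, this gives $\alpha_v(p)=p(v)>0$ exactly when $v\in \car(p)$, and $\alpha_v(p)=p(v)=0$ otherwise. Consequently $\{v\in \Sigma^0:\alpha_v(p)\neq 0\}=\car(p)$ is finite, and
\[
\sum_{v\in \Sigma^0}\alpha_v(p)=\sum_{v\in \car(p)}p(v)=1,
\]
since the coefficients of a convex combination sum to $1$. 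As all terms are nonnegative, the same identity yields $0\leq \alpha_v(p)\leq 1$, so each $\alpha_v$ indeed maps $|\Sigma|$ into $[0,1]$. Together with the countability (here, finiteness) of the support at each point, this establishes that $\{\alpha_v:v\in \Sigma^0\}$ is a partition of unity on $|\Sigma|$.

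For the remaining clause I would simply combine the definitions \eqref{eq:st-app-vgg-rev:3} and \eqref{eq:st-app-v4:3}: $\coz(\alpha_v)=\{p\in |\Sigma|:\alpha_v(p)>0\}=\{p\in |\Sigma|:p(v)>0\}=\st\langle v\rangle$ for every $v\in \Sigma^0$. I do not expect any genuine obstacle; the proposition is a direct unravelling of the definitions, the only step deserving a moment's attention being that the representation of a point as a convex combination of vertices is unique — so that barycentric coordinates, and hence the carrier, are well defined — which is in turn a consequence of the linear independence of $\Sigma^0$ in $\ell_1(\Sigma^0)$.
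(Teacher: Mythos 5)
Your argument is correct and is essentially the paper's own: the paper offers no separate proof, stating only that the proposition is an immediate consequence of \eqref{eq:st-app-vgg-rev:1}, \eqref{eq:st-app-vgg-rev:3} and \eqref{eq:st-app-v4:3}, and your write-up simply unravels exactly those three definitions (uniqueness of the convex representation, finiteness of the carrier, coefficients summing to $1$, and $\coz(\alpha_v)=\st\langle v\rangle$ by definition).
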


We now turn to the other essential concept in this section. For a
cover $\mathscr{U}$ of a space $X$, a continuous map
$f:X\to |\mathscr{N}(\mathscr{U})|$ is called \emph{canonical for
  $\mathscr{U}$} if
\begin{equation}
  \label{eq:st-app-v1:3}
  f^{-1}(\st\langle U\rangle)\subset U,\quad \text{for every $U\in
    \mathscr{U}$.} 
\end{equation}
Canonical maps are essentially partitions of unity, which are
index-subordinated to the corresponding cover of the space.

\begin{theorem}
  \label{theorem-st-app-v12:1}
    A cover $\mathscr{U}$ of a space $X$ has an index-subordinated
  partition of unity if and only if $\mathscr{U}$ has a canonical
  map. 
\end{theorem}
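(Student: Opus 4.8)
The plan is to establish the two implications of the equivalence separately, in each direction explicitly converting the given structure into the other; the passage from an index-subordinated partition of unity to a canonical map is the substantial half. For the easy direction, suppose $\mathscr{U}$ has a canonical map $f:X\to|\mathscr{N}(\mathscr{U})|$. Recalling that $\mathscr{N}^0(\mathscr{U})=\mathscr{U}$, I would set $\xi_U=\alpha_U\circ f$ for each $U\in\mathscr{U}$; these are continuous functions $X\to[0,1]$, and by Proposition \ref{proposition-st-app-v2:1} one has $\sum_{U\in\mathscr{U}}\xi_U(x)=\sum_{U\in\mathscr{U}}\alpha_U(f(x))=1$ for every $x\in X$, the sum being finite because $\{U\in\mathscr{U}:\xi_U(x)\neq0\}=\car(f(x))$ is a finite simplex. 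Hence $\{\xi_U:U\in\mathscr{U}\}$ is a partition of unity on $X$, and it is index-subordinated to $\mathscr{U}$ since, by \eqref{eq:st-app-v4:3} and \eqref{eq:st-app-v1:3}, $\coz(\xi_U)=\{x\in X:\alpha_U(f(x))>0\}=f^{-1}(\st\langle U\rangle)\subset U$.

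For the converse, suppose $\mathscr{U}$ has an index-subordinated partition of unity; by Theorem \ref{theorem-st-app-vgg-rev:1} I may assume it is locally finite, say $\{\xi_U:U\in\mathscr{U}\}$ with $\coz(\xi_U)\subset U$. Define $f:X\to\ell_1(\mathscr{U})$ by $f(x)(U)=\xi_U(x)$. Using the description \eqref{eq:st-app-vgg-rev:1} of $|\mathscr{N}(\mathscr{U})|$, I would first check that $f$ takes values there: each coordinate $f(x)(U)=\xi_U(x)\geq0$; local finiteness makes $\coz(f(x))=\{U\in\mathscr{U}:\xi_U(x)>0\}$ finite; and if $U\in\coz(f(x))$ then $x\in\coz(\xi_U)\subset U$, so $x\in\bigcap\coz(f(x))\neq\emptyset$ and hence $\coz(f(x))\in\mathscr{N}(\mathscr{U})$ by \eqref{eq:st-app-vgg-rev:8}. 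Thus $f(x)\in|\mathscr{N}(\mathscr{U})|$ with $\car(f(x))=\coz(f(x))$, and it only remains to prove that $f$ is continuous and canonical. The canonical property is immediate from \eqref{eq:st-app-v4:3}: $f^{-1}(\st\langle U\rangle)=\{x\in X:\alpha_U(f(x))>0\}=\coz(\xi_U)\subset U$ for every $U\in\mathscr{U}$.

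The crux — and the step I expect to be the main obstacle — is the continuity of $f$ in the Whitehead topology, for which local finiteness of the partition of unity is exactly what is needed. Given $x\in X$, I would choose an open neighbourhood $W$ of $x$ meeting $\coz(\xi_U)$ only for $U$ in some finite subset $\mathscr{V}\subset\mathscr{U}$; then $\xi_U$ vanishes on $W$ whenever $U\notin\mathscr{V}$, so $f(W)$ is contained in the geometric realisation $|L|$ of the finite subcomplex $L=\{\sigma\in\mathscr{N}(\mathscr{U}):\sigma\subset\mathscr{V}\}$. On the finite complex $|L|$ the Whitehead topology coincides with the topology inherited from $\ell_1(\mathscr{U})$: it is the weak topology determined by the finitely many closed simplices $|\sigma|$, $\sigma\in L$, on each of which the two topologies already agree, and for a finite closed cover the weak topology equals the subspace topology. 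Since on $W$ the map $f$ is given coordinatewise by the finitely many continuous functions $\xi_U$ ($U\in\mathscr{V}$), the remaining coordinates being zero, $f\uhr W:W\to|L|$ is continuous, and hence so is $f\uhr W$ as a map into $|\mathscr{N}(\mathscr{U})|$, since the subcomplex $|L|$ carries the subspace topology from $|\mathscr{N}(\mathscr{U})|$. As such neighbourhoods $W$ cover $X$, $f$ is continuous, which completes the construction.
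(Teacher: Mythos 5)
Your proof is correct and follows essentially the same route as the paper's: the easy direction composes the barycentric coordinate functions with $f$, and the converse passes to a locally finite partition of unity via Theorem \ref{theorem-st-app-vgg-rev:1}, defines the same map coordinatewise, and proves continuity by localising to a neighbourhood where only finitely many $\xi_U$ survive so that the image lies in a finite subcomplex on which the Whitehead and $\ell_1$-subspace topologies agree. The only cosmetic difference is that the paper phrases the finite subcomplex as $\mathscr{N}(\mathscr{U}_p)\subset\ell_1(\mathscr{U}_p)=\R^{\mathscr{U}_p}$ rather than via the weak topology over finitely many closed simplices.
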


\begin{proof}
  Let $\mathscr{U}$ be a cover of $X$ and $\alpha_U$, $U\in \mathscr{U}$,
  be the barycentric coordinate functions of
  $|\mathscr{N}(\mathscr{U})|$.\smallskip

  Suppose that $f:X\to |\mathscr{N}(\mathscr{U})|$ is a canonical map
  for $\mathscr{U}$. Since $f$ is continuous, by Proposition
  \ref{proposition-st-app-v2:1},
  $\{\alpha_U\circ f:U\in \mathscr{U}\}$ is a partition of unity on
  $X$. By the same proposition and \eqref{eq:st-app-v1:3}, we also
  have that
  \[
  \coz(\alpha_U\circ f)=f^{-1}(\coz(\alpha_U))=f^{-1}(\st\langle
  U\rangle)\subset U,\quad U\in \mathscr{U}.
  \]
  
  Conversely, suppose that $\mathscr{U}$ has an index-subordinated
  partition of unity. Then by Theorem \ref{theorem-st-app-vgg-rev:1},
  $\mathscr{U}$ also has an index-subordinated locally finite
  partition of unity $\{\xi_U:U\in \mathscr{U}\}$. For each $x\in X$,
  let $\sigma_\xi(x)\in \mathscr{N}(\mathscr{U})$ be the simplex
  determined by the point $x$ and the functions $\xi_U$,
  $U\in \mathscr{U}$, namely
  $\sigma_\xi(x)=\{U\in \mathscr{U}:\xi_U(x)>0\}$.  Next, define a map
  $f:X\to |\mathscr{N}(\mathscr{U})|$ by
  \begin{equation}
    \label{eq:st-app-v3:1}
  f(x)=\sum_{U\in
    \sigma_\xi(x)}\xi_U(x)\cdot U,\quad x\in X.
\end{equation}

Since $\{\xi_U:U\in \mathscr{U}\}$ is a locally finite partition of
unity, each point $p\in X$ has a neighbourhood $V_p\subset X$ such
that
$\mathscr{U}_p=\{U\in \mathscr{U}: V_p\cap \coz(\xi_U)\neq
\emptyset\}$ is a finite set. According to \eqref{eq:st-app-v3:1},
this implies that
$f(V_p)\subset |\mathscr{N}(\mathscr{U}_p)|\subset
\ell_1(\mathscr{U}_p)$. However, $\ell_1(\mathscr{U}_p)$ is now the
usual Euclidean space $\R^{\mathscr{U}_p}$ because $\mathscr{U}_p$ is
a finite set.  For the same reason, $\mathscr{N}(\mathscr{U}_p)$ has
finitely many simplices. Therefore, the Whitehead topology on
$|\mathscr{N}(\mathscr{U}_p)|$ is the subspace topology on
$|\mathscr{N}(\mathscr{U}_p)|$ as a subset of $\R^{\mathscr{U}_p}$.
 Since each
function $\xi_U=\alpha_U\circ f$, $U\in \mathscr{U}_p$, is continuous,
so is the restriction $f\uhr V_p$. This shows that $f$ is continuous
as well. Finally, let $U\in \mathscr{U}$ and
$x\in f^{-1}\left(\st\langle U\rangle\right)$. Then
$f(x)\in \st\langle U\rangle$ and by \eqref{eq:st-app-v4:3} and
\eqref{eq:st-app-v3:1}, we get that
$\xi_U(x)=\alpha_U(f(x))>0$. Accordingly, $U\in \sigma_\xi(x)$ which
implies that $x\in U$ because $\coz(\xi_U)\subset U$. Thus, $f$ is
canonical for $\mathscr{U}$, see \eqref{eq:st-app-v1:3}.
\end{proof}

Canonical maps will be involved in the proof of Theorem
\ref{theorem-st-app-v10:1} with two properties, which are briefly
discussed below.\medskip

For a simplicial complex $\Sigma$, as mentioned before, the carrier
$\car(p)$ of a point $p\in |\Sigma|$ is the minimal simplex of
$\Sigma$ with $p\in |\car(p)|$, see \eqref{eq:st-app-vgg-rev:1}.
According to \eqref{eq:st-app-vgg-rev:3} and \eqref{eq:st-app-v4:3},
it has the following natural representation
\begin{equation}
  \label{eq:st-app-v4:2}
  \car(p)=\left\{v\in \Sigma^0:
    p\in \st\langle v\rangle\right\}.
\end{equation}

For a cover $\mathscr{U}$ of $X$ and $x\in X$, we will associate the
simplicial complex
\begin{equation}
  \label{eq:st-app-vgg-rev:7}
  \Sigma_\mathscr{U}(x)=\left\{\sigma\in \Sigma_\mathscr{U}: x\in\bigcap
  \sigma\right\}. 
\end{equation}
According to \eqref{eq:st-app-vgg-rev:8}, we have
that $\Sigma_\mathscr{U}(x)\subset \mathscr{N}(\mathscr{U})$, for
every $x\in X$. Thus, \eqref{eq:st-app-vgg-rev:7} defines a
natural set-valued mapping $\Sigma_\mathscr{U}:X\smap
\mathscr{N}(\mathscr{U})$.  To this mapping, we will associate the
mapping $|\Sigma_\mathscr{U}|:X\smap |\mathscr{N}(\mathscr{U})|$ which
assigns to each $x\in X$ the geometric realisation
$|\Sigma_\mathscr{U}|(x)= |\Sigma_\mathscr{U}(x)|$. In terms of this
mapping, we have the following selection interpretation of canonical
maps which extends an observation of Dowker \cite{dowker:47}, see
Remark \ref{remark-st-app-vgg-rev:6}.

\begin{proposition}
  \label{proposition-st-app-v11:1}
  Let $\mathscr{U}$ be a cover of a space $X$. Then a continuous map
  $f:X\to |\mathscr{N}(\mathscr{U})|$ is canonical for $\mathscr{U}$
  if and only if $f$ is a selection for the mapping
  $|\Sigma_\mathscr{U}|:X\smap |\mathscr{N}(\mathscr{U})|$.
\end{proposition}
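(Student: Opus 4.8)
The plan is to show that the two properties are merely different ways of phrasing one pointwise condition, namely: for every $x\in X$ and every $U\in\mathscr{U}$, if $f(x)\in\st\langle U\rangle$ then $x\in U$.

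First I would record an elementary observation about the values of $|\Sigma_\mathscr{U}|$. For each $x\in X$ the family $\Sigma_\mathscr{U}(x)$ from \eqref{eq:st-app-vgg-rev:7} is a simplicial complex --- in fact a subcomplex of $\Sigma_\mathscr{U}$, and therefore of $\mathscr{N}(\mathscr{U})$ by \eqref{eq:st-app-vgg-rev:8} --- since $x\in\bigcap\sigma$ and $\emptyset\neq\tau\subset\sigma$ force $x\in\bigcap\tau$. Now take any $p\in|\mathscr{N}(\mathscr{U})|$. For $\sigma\in\mathscr{N}(\mathscr{U})$ one has $p\in|\sigma|$ exactly when $\car(p)=\coz(p)\subset\sigma$ (by \eqref{eq:st-app-vgg-rev:1} and convexity of $|\sigma|$), and $\Sigma_\mathscr{U}(x)$ contains every nonempty face of each of its simplices; hence $p\in|\Sigma_\mathscr{U}(x)|=\bigcup_{\sigma\in\Sigma_\mathscr{U}(x)}|\sigma|$ if and only if $\car(p)\in\Sigma_\mathscr{U}(x)$, which by \eqref{eq:st-app-vgg-rev:7} means precisely that $x\in\bigcap\car(p)$.

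Next I would substitute the representation \eqref{eq:st-app-v4:2} of the carrier, which for $\Sigma=\mathscr{N}(\mathscr{U})$ (recall $\mathscr{N}^0(\mathscr{U})=\mathscr{U}$) reads $\car(p)=\{U\in\mathscr{U}:p\in\st\langle U\rangle\}$. Thus $x\in\bigcap\car(p)$ says exactly that $x\in U$ for every $U\in\mathscr{U}$ with $p\in\st\langle U\rangle$. Combining this with the previous step, for a continuous $f:X\to|\mathscr{N}(\mathscr{U})|$ we obtain that $f$ is a selection for $|\Sigma_\mathscr{U}|$ if and only if, for every $x\in X$ and every $U\in\mathscr{U}$, $f(x)\in\st\langle U\rangle$ implies $x\in U$. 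Finally it remains to note that this last statement, read with the quantifiers over $x$ and $U$ interchanged, is literally the requirement that $f^{-1}(\st\langle U\rangle)\subset U$ for every $U\in\mathscr{U}$, i.e.\ the defining condition \eqref{eq:st-app-v1:3} of a canonical map.

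The only step I expect to require a moment's care is the equivalence in the second paragraph between $p\in|\Sigma_\mathscr{U}(x)|$ and $\car(p)\in\Sigma_\mathscr{U}(x)$: it relies on the minimality of the carrier (so that $p\in|\sigma|$ already forces $\car(p)\subset\sigma$) together with the closure of a simplicial complex under nonempty faces. Everything else is a straightforward rearrangement of the definitions, and no topological input beyond ``$f$ takes values in $|\mathscr{N}(\mathscr{U})|$'' is used.
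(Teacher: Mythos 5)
Your proof is correct and follows essentially the same route as the paper's: both arguments hinge on the star--representation \eqref{eq:st-app-v4:2} of the carrier together with its minimality (so that $f(x)\in|\sigma|$ with $\sigma\in\Sigma_\mathscr{U}(x)$ forces $\car(f(x))\subset\sigma$). You merely package the two implications as a single chain of pointwise equivalences, which is a harmless reorganisation.
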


\begin{proof}
  Let $f$ be a canonical map for $\mathscr{U}$, and $x\in X$. Whenever
  $U\in \car(f(x))$, it follows from \eqref{eq:st-app-v4:2} that
  $f(x)\in \st\langle U\rangle$ and therefore, by
  (\ref{eq:st-app-v1:3}), $x\in U$. Thus, by
  \eqref{eq:st-app-vgg-rev:7}, $\car(f(x))\in \Sigma_\mathscr{U}(x)$
  and we have that
  $f(x)\in|\car(f(x))|\subset |\Sigma_\mathscr{U}|(x)$. Conversely,
  suppose that $f$ is as selection for $|\Sigma_\mathscr{U}|$, and
  $x\in f^{-1}\left( \st\langle U\rangle\right)$ for some
  $U\in \mathscr{U}$. Then by (\ref{eq:st-app-v4:2}), $U\in\car(f(x))$
  because $f(x)\in \st\langle U\rangle$. Moreover, $f(x)\in |\sigma|$
  for some $\sigma\in \Sigma_\mathscr{U}(x)$ because
  $f(x)\in |\Sigma_\mathscr{U}|(x)$. Since $\car(f(x))$ is the minimal
  simplex with this property, we get that
  $U\in \car(f(x))\subset \sigma$ and, therefore, $x\in U$.  That is,
  $f^{-1}(\st\langle U\rangle)\subset U$.
\end{proof}

Each {simplicial map} $g:\Sigma_1\to \Sigma_2$, between simplicial
complexes $\Sigma_1$ and $\Sigma_2$, can be extended to a continuous
map $|g|:|\Sigma_1|\to |\Sigma_2|$ which is affine on each geometric
simplex $|\sigma|$, for $\sigma\in \Sigma_1$. This map is simply
defined by
\[
  |g|(p)=\sum_{v\in \car(p)}
  \alpha_v(p)\cdot g(v),\quad p\in |\Sigma_1|.
\]

If a cover $\mathscr{V}$ of $X$ refines another cover $\mathscr{U}$,
then there exists a natural simplicial map
$r:\mathscr{N}(\mathscr{V})\to \mathscr{N}(\mathscr{U})$ with 
$V\subset r(V)$, for each $V\in \mathscr{V}$.  Such a map is commonly
called a \emph{canonical projection}, or a \emph{refining simplicial
  map}, or simply a \emph{refining map}. Canonical maps are preserved
by refinements in the following sense.

\begin{corollary}
  \label{corollary-st-app-vgg-rev:1}
  Let $\mathscr{U}$ and $\mathscr{V}$ be covers of a space $X$ such
  that $\mathscr{V}$ refines $\mathscr{U}$. If
  $r:\mathscr{N}(\mathscr{V})\to \mathscr{N}(\mathscr{U})$ is a
  refining map and $g:X\to |\mathscr{N}(\mathscr{V})|$ is canonical
  for $\mathscr{V}$, then the composite map
  $|r|\circ g:X\to |\mathscr{N}(\mathscr{U})|$ is canonical for
  $\mathscr{U}$.
\end{corollary}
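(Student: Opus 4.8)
The plan is to reduce everything to the selection characterisation of canonical maps in Proposition \ref{proposition-st-app-v11:1}. Since $|r|:|\mathscr{N}(\mathscr{V})|\to|\mathscr{N}(\mathscr{U})|$ and $g$ are both continuous, the composite $|r|\circ g$ is automatically continuous, so the only thing to check is that $|r|\circ g$ is a selection for $|\Sigma_\mathscr{U}|:X\smap|\mathscr{N}(\mathscr{U})|$. By Proposition \ref{proposition-st-app-v11:1}, the hypothesis that $g$ is canonical for $\mathscr{V}$ says precisely that $g(x)\in|\Sigma_\mathscr{V}|(x)$ for every $x\in X$, so it suffices to prove that $|r|\big(|\Sigma_\mathscr{V}|(x)\big)\subset|\Sigma_\mathscr{U}|(x)$ for every $x\in X$; then $(|r|\circ g)(x)\in|\Sigma_\mathscr{U}|(x)$ and a second appeal to Proposition \ref{proposition-st-app-v11:1} yields that $|r|\circ g$ is canonical for $\mathscr{U}$.

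First I would check the combinatorial inclusion $r\big(\Sigma_\mathscr{V}(x)\big)\subset\Sigma_\mathscr{U}(x)$. Fix $x\in X$ and a simplex $\sigma\in\Sigma_\mathscr{V}(x)$, so that $x\in\bigcap\sigma$ by \eqref{eq:st-app-vgg-rev:7}. For each $V\in\sigma$ the defining property of the refining map gives $x\in V\subset r(V)$, hence $x\in\bigcap r(\sigma)$; that is, $r(\sigma)\in\Sigma_\mathscr{U}(x)$, as wanted. Next I would pass to the geometric realisations. Any $p\in|\Sigma_\mathscr{V}|(x)=|\Sigma_\mathscr{V}(x)|$ lies in $|\car(p)|$ with $\car(p)\in\Sigma_\mathscr{V}(x)$, and the formula $|r|(p)=\sum_{v\in\car(p)}\alpha_v(p)\cdot r(v)$ exhibits $|r|(p)$ as a convex combination of the vertices of the simplex $r(\car(p))$ (the barycentric coordinates $\alpha_v(p)$, $v\in\car(p)$, being nonnegative and summing to $1$). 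Therefore $|r|(p)\in|r(\car(p))|\subset|\Sigma_\mathscr{U}(x)|=|\Sigma_\mathscr{U}|(x)$ by the previous step, and so $|r|\big(|\Sigma_\mathscr{V}|(x)\big)\subset|\Sigma_\mathscr{U}|(x)$, completing the argument.

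The whole proof is bookkeeping; the only genuine content is the elementary implication ``$V\subset r(V)$ for all $V$'' $\Longrightarrow$ ``$r(\Sigma_\mathscr{V}(x))\subset\Sigma_\mathscr{U}(x)$'', and even that is one line. I do not expect any serious obstacle, the subtlest point being merely to observe that $|r|$ restricted to $|\Sigma_\mathscr{V}(x)|$ is governed by the restriction of $r$ to $\Sigma_\mathscr{V}(x)$, which is immediate because the defining sum for $|r|(p)$ involves only the vertices of $\car(p)$.

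If one prefers an argument that does not invoke Proposition \ref{proposition-st-app-v11:1}, the same conclusion can be reached directly from \eqref{eq:st-app-v1:3}: given $U\in\mathscr{U}$ and $x$ with $|r|(g(x))\in\st\langle U\rangle$, one has $U\in\car(|r|(g(x)))$ by \eqref{eq:st-app-v4:2}, and the formula for $|r|$ forces $U=r(W)$ for some $W\in\car(g(x))$; since $g$ is canonical for $\mathscr{V}$, \eqref{eq:st-app-v4:2} and \eqref{eq:st-app-v1:3} give $x\in g^{-1}(\st\langle W\rangle)\subset W\subset r(W)=U$, so $(|r|\circ g)^{-1}(\st\langle U\rangle)\subset U$.
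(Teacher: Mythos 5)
Your proof is correct and follows essentially the same route as the paper: the paper's own proof is exactly the observation that $r\left(\Sigma_\mathscr{V}(x)\right)\subset \Sigma_\mathscr{U}(x)$ (because $V\subset r(V)$) combined with Proposition \ref{proposition-st-app-v11:1}; you have merely spelled out the passage to geometric realisations, which the paper leaves implicit.
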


\begin{proof}
  This follows from Proposition \ref{proposition-st-app-v11:1} and the
  fact that $r\left(\Sigma_\mathscr{V}(x)\right)\subset
  \Sigma_\mathscr{U}(x)$, $x\in X$, because $V\subset r(V)$ for every
  $V\in \mathscr{V}$, see \eqref{eq:st-app-vgg-rev:7}. 
\end{proof}
 
We conclude this section with several remarks. 

\begin{remark}
  \label{remark-st-app-vgg-rev:1}
  For a space $X$, the \emph{support} of a function $\xi:X\to \R$,
  called also the \emph{topological support}, is the set
  $\supp(\xi)=\overline{\coz(\xi)}$. In several sources, a partition
  of unity $\{\xi_U:U\in \mathscr{U}\}$ on a space $X$ is called
  \emph{index-subordinated} to a cover $\mathscr{U}$ of $X$ if
  $\supp(\xi_U)\subset U$, for every $U\in \mathscr{U}$; and
  $\{\xi_U:U\in \mathscr{U}\}$ is called \emph{weakly
    index-subordinated} to $\mathscr{U}$ if $\coz(\xi_U)\subset U$,
  for every $U\in \mathscr{U}$, see e.g.\ \cite{MR3099433}. However,
  these variations in the terminology do not affect the results of
  this section. Namely, if $\{\eta_U:U\in \mathscr{U}\}$ is a
  partition of unity on $X$, then $X$ also has a (locally finite)
  partition of unity $\{\xi_U:U\in \mathscr{U}\}$ with
  $\supp(\xi_U)\subset \coz(\eta_U)$, for all $U\in \mathscr{U}$,
  \cite[Proposition 2.7.4]{MR3099433}. This property is essentially
  the construction of M. Mather for proving Theorem
  \ref{theorem-st-app-vgg-rev:1}.
\end{remark}

\begin{remark}
  \label{remark-st-app-vgg-rev:2}
  Canonical maps provide an isomorphism between simplicial complexes
  and nerves of covers. Namely, if
  $\mathscr{O}_\Sigma=\big\{\st\langle {v}\rangle:
  {v}\in\Sigma^0\big\}$ is the cover of $|\Sigma|$ by the open stars
  of the vertices of a simplicial complex $\Sigma$ and
  $\sigma\subset \Sigma^0$, then $\sigma\in \Sigma$ if and
  only if
  $\bigcap_{{v}\in \sigma}\st\langle {v}\rangle\neq
  \emptyset$. That is, $\sigma\in \Sigma$ precisely when
  $\st\langle\sigma\rangle=\{\st\langle {v}\rangle:
  {v}\in \sigma\}\in \mathscr{N}(\mathscr{O}_\Sigma)$. Hence,
  $\st\langle \cdot\rangle: \Sigma\to \mathscr{N}(\mathscr{O}_\Sigma)$
  is a simplicial isomorphism and the associated map
  $|\st\langle\cdot\rangle|: |\Sigma|\to
  |\mathscr{N}(\mathscr{O}_\Sigma)|$ is both a homeomorphism and a
  canonical map for $\mathscr{O}_\Sigma$.
\end{remark}

\begin{remark}
  \label{remark-st-app-vgg-rev:6}
  In the case of a point-finite cover $\mathscr{U}$ of $X$, Proposition
  \ref{proposition-st-app-v11:1} is reduced to the following selection
  interpretation of canonical maps given by Dowker
  \cite{dowker:47}. Whenever $x\in X$, let
  $\sigma(x)=\{U\in \mathscr{U}: x\in U\}\in \mathscr{N}(\mathscr{U})$
  be the simplex determined by $x$. Then a continuous map
  $f:X\to |\mathscr{N}(\mathscr{U})|$ is canonical for $\mathscr{U}$
  if and only if $f(x)\in |\sigma(x)|$, for every $x\in X$. While
  $\sigma(x)$ is only an element of $\Sigma_\mathscr{U}(x)$, we have
  that $|\sigma(x)|=|\Sigma_\mathscr{U}|(x)$ because
  $\sigma\subset \sigma(x)$, for each
  $\sigma\in \Sigma_\mathscr{U}(x)$.
\end{remark}

\section{Aspherical sequences of mappings and selections}
\label{sec:asph-sequ-select}

A  mapping $\varphi:X\smap Y$ is  \emph{lower locally
  constant} \cite{gutev:05} if the set $\{x\in X:K\subset
\varphi(x)\}$ is open in $X$, for every compact subset $K\subset Y$.
This property appeared in a paper of Uspenskij \cite{uspenskij:98};
later on, it was used by some authors (see, for instance,
\cite{chigogidze-valov:00a,valov:00}) under the name ``strongly
l.s.c.'', while in papers of other authors strongly l.s.c.\ was
already used for a different property of set-valued mappings (see, for
instance, \cite{gutev:95e}). Every lower locally constant
mapping is l.s.c.\ but the converse fails in general and
counterexamples abound. In fact, if we consider a single-valued map
$f:X\to Y$ as a set-valued one, then $f$ is l.s.c.\ if and only if it
is continuous, while $f$ will be lower locally constant if and only if
it is locally constant. Thus, our terminology provides some natural
analogy with the single-valued case.\medskip

Let $k\ge 0$. For subsets $S, B\subset Y$, we will write that
$S\embed{k} B$ if every continuous map of the $k$-sphere in $S$ can be
extended to a continuous map of the $(k+1)$-ball in $B$. Evidently,
the relation $S\embed{k} B$ implies that $S\subset B$. Similarly,
for mappings $\varphi, \psi :X\smap Y$, we will write
$\varphi \embed{k}\psi$ to express that $\varphi(x)\embed{k}\psi(x)$,
for every $x\in X$.  In these terms, we shall say that a sequence of
mappings $\varphi_k: X\smap Y$, $0\leq k\leq n$, is
\emph{aspherical} if $\varphi_k\embed{k}\varphi_{k+1}$, for every
$k<n$.  The
following theorem will be proved in this section.

\begin{theorem}
\label{theorem-nerves-v2:5}
Let $X$ be a paracompact space with $\dim(X)\le n$, $Y$ be a space,
and $\varphi_{k}:X\smap Y$, $0\leq k\leq n$, be an aspherical sequence
of lower locally constant mappings. Then $\varphi_{n}$ has a
continuous selection.
\end{theorem}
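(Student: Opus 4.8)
The plan is to obtain the selection as a composite $g\circ f$, where $f\colon X\to|\mathscr{N}(\mathscr{U})|$ is a canonical map for a locally finite open cover $\mathscr{U}$ of $X$ of order at most $n+1$ (so that $\mathscr{N}(\mathscr{U})=\mathscr{N}^n(\mathscr{U})$), and $g\colon|\mathscr{N}(\mathscr{U})|\to Y$ is a continuous map with $g(|\sigma|)\subseteq\varphi_n(x)$ whenever $\sigma\in\mathscr{N}(\mathscr{U})$ and $x\in\bigcap\sigma$. Such $\mathscr{U}$, $f$ and $g$ suffice: by Proposition \ref{proposition-st-app-v11:1}, $f(x)\in|\Sigma_\mathscr{U}|(x)$, so $f(x)\in|\tau|$ for some $\tau$ with $x\in\bigcap\tau$; then $\sigma:=\car(f(x))\subseteq\tau$ also satisfies $x\in\bigcap\sigma$, whence $g(f(x))\in g(|\sigma|)\subseteq\varphi_n(x)$. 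As $X$ is normal, every locally finite open cover of $X$ admits a canonical map (Theorems \ref{theorem-st-app-v3:1} and \ref{theorem-st-app-v12:1}), so only $\mathscr{U}$ and $g$ need to be built.

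I would build these together by induction on $k=0,1,\dots,n$, producing at stage $k$ a locally finite open cover $\mathscr{U}_k$ of $X$ of order $\le n+1$ and a continuous map $g_k\colon|\mathscr{N}^k(\mathscr{U}_k)|\to Y$ with $g_k(|\sigma|)\subseteq\varphi_k(x)$ for all $\sigma\in\mathscr{N}^k(\mathscr{U}_k)$ and $x\in\bigcap\sigma$, where for $k\ge1$ the cover $\mathscr{U}_k$ refines $\mathscr{U}_{k-1}$ through a refining map $\pi_k$; then $\mathscr{U}=\mathscr{U}_n$ and $g=g_n$ do the job. For $k=0$: since $\varphi_0$ is lower locally constant with nonempty values, the sets $\varphi_0^{-1}(y)$, $y\in Y$, form an open cover of $X$, and — $X$ being paracompact with $\dim X\le n$ — it has a locally finite open refinement $\mathscr{U}_0$ of order $\le n+1$; choosing $y_U\in Y$ with $U\subseteq\varphi_0^{-1}(y_U)$ and putting $g_0(U)=y_U$ gives a (necessarily continuous, since $|\mathscr{N}^0(\mathscr{U}_0)|$ is discrete) map with the required property.

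For the step $k\to k+1$: for each $(k+1)$-simplex $\sigma$ of $\mathscr{N}(\mathscr{U}_k)$ the set $g_k(\partial|\sigma|)$ is compact and, by the inductive hypothesis applied to the proper faces of $\sigma$, contained in $\varphi_k(x)$ for every $x\in\bigcap\sigma$; hence, using $\varphi_k\embed{k}\varphi_{k+1}$, for each such $x$ one extends $g_k\uhr\partial|\sigma|$, viewed as a map of the $k$-sphere into $\varphi_k(x)$, to a continuous map $h^x_\sigma$ of the ball $|\sigma|\cong\B^{k+1}$ into $\varphi_{k+1}(x)$; and by lower local constancy of $\varphi_{k+1}$ applied to the compact set $h^x_\sigma(|\sigma|)$, the set $W^x_\sigma=\{x'\in X:h^x_\sigma(|\sigma|)\subseteq\varphi_{k+1}(x')\}$ is open and contains $x$, so $\{W^x_\sigma:x\in\bigcap\sigma\}$ is an open cover of $\bigcap\sigma$. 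One then refines $\mathscr{U}_k$ to a locally finite open cover $\mathscr{U}_{k+1}$ of order $\le n+1$, with refining map $\pi_{k+1}$, chosen fine enough that $\pi_{k+1}$ carries every $(k+1)$-simplex $\rho$ of $\mathscr{N}(\mathscr{U}_{k+1})$ either onto a simplex of dimension $\le k$, or onto a $(k+1)$-simplex $\sigma$ for which $\bigcap\rho$ lies in a single member $W^p_\sigma$ of the above cover with $p\in\bigcap\sigma$; and one sets $g_{k+1}=g_k\circ|\pi_{k+1}|$ on $|\mathscr{N}^k(\mathscr{U}_{k+1})|$ and on every $(k+1)$-simplex $\rho$ of the first kind, while $g_{k+1}=h^p_\sigma\circ|\pi_{k+1}|$ on every $(k+1)$-simplex $\rho$ of the second kind. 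Since $h^p_\sigma$ extends $g_k\uhr\partial|\sigma|$, these prescriptions agree on common faces (all equal $g_k\circ|\pi_{k+1}|$ there), so $g_{k+1}$ is a well-defined continuous map with the required property — on simplices of the second kind from $\bigcap\rho\subseteq W^p_\sigma$, and on the rest from $\bigcap\rho\subseteq\bigcap\pi_{k+1}(\rho)$ together with $\varphi_k(x)\subseteq\varphi_{k+1}(x)$ (a consequence of $\varphi_k\embed{k}\varphi_{k+1}$).

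The hard part is precisely the clause ``chosen fine enough''. Concretely, for $p\in X$ one forms the open neighbourhood
\[
  G_p=\bigcap\{U\in\mathscr{U}_k:p\in U\}\ \cap\ \bigcap\bigl\{W^p_\sigma:\sigma\subseteq\{U\in\mathscr{U}_k:p\in U\},\ \card\sigma=k+2\bigr\}
\]
of $p$, and one needs $\mathscr{U}_{k+1}$ to refine $\{G_p:p\in X\}$ in a way that makes its refining map collapse $(k+1)$-simplices as stated above; obtaining such a refinement rests on the local finiteness of $\mathscr{U}_k$ and the full normality of the paracompact space $X$ — one first interposes a locally finite open star-refinement of $\{G_p\}$ and only afterwards trims the order down to $\le n+1$ — and the verification that the resulting refining map then collapses $(k+1)$-simplices as required, in particular that the witnessing point $p$ can be kept inside $\bigcap\pi_{k+1}(\rho)$ so that $G_p\subseteq W^p_{\pi_{k+1}(\rho)}$, is the delicate core of the argument. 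Everything else is routine: continuity in the Whitehead topology, the facts that $|\pi_{k+1}|$ is affine on each simplex and maps $|\rho|$ onto $|\pi_{k+1}(\rho)|$, and the implication $S\embed{k}B\Rightarrow S\subseteq B$.
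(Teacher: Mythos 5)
Your overall architecture is exactly the paper's: what you call ``a cover $\mathscr{U}_k$ together with a continuous $g_k$ on $|\mathscr{N}^k(\mathscr{U}_k)|$ satisfying $g_k(|\sigma|)\subseteq\varphi_k(x)$ for $x\in\bigcap\sigma$'' is precisely the paper's notion of a \emph{$k$-skeletal selection} (condition \eqref{eq:st-app-vgg-rev:9}); your base case is Proposition \ref{proposition-nerves-v2:2}, your inductive step is Proposition \ref{proposition-nerves-v4:1}, and your final composition with a canonical map (using $\dim X\le n$ to force $\mathscr{N}(\mathscr{V})=\mathscr{N}^n(\mathscr{V})$, then Proposition \ref{proposition-st-app-v11:1}) is the paper's proof of the theorem. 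But the one step you explicitly decline to prove --- the existence of a refinement $\mathscr{U}_{k+1}$ whose refining map sends every non-collapsed $(k+1)$-simplex $\rho$ onto a simplex $\sigma$ admitting a \emph{single} witness $p\in\bigcap\sigma$ with $\bigcap\rho\subseteq W^p_\sigma$ --- is not a routine verification to be deferred; it is the entire content of the inductive step, the only place where lower local constancy and paracompactness actually interact. A proof that ends with ``the verification \dots is the delicate core of the argument'' has not proved the theorem.

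For comparison, here is how the paper closes that gap in Proposition \ref{proposition-nerves-v4:1}. First, the simplex-wise extensions at a point $x$ are bundled into one compact set $K(x)=\bigcup\{u_{(x,\sigma)}(|\sigma|):\sigma\in\Sigma^{k+1}_{\mathscr{U}}(x)\}$ (compact by local finiteness of $\mathscr{U}$), so that the witnessing open set $W_{(x,U)}=\{z\in U: K(x)\subseteq\varphi(z)\}$ depends only on the point $x$ and a member $U\ni x$, \emph{not} on the particular $(k{+}1)$-simplex --- this is what lets a single choice per vertex of the refined nerve serve all simplices through that vertex, and it is absent from your set-up, where $W^p_\sigma$ still carries the simplex index. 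Second, one takes an open \emph{star}-refinement $\mathscr{V}$ of $\{W_{(x,U)}\}$ and a star-refining simplicial map $\ell$ (i.e.\ $V^*\subseteq\ell(V)$), whose key property \eqref{eq:nerves-v2:2} is $\bigcup\sigma\subseteq\bigcap\ell(\sigma)$; this is what produces, for each simplex $\sigma$ of the new nerve, a point $\pi(\sigma)$ with $\ell(\sigma)\in\Sigma_{\mathscr{U}}(\pi(\sigma))$ and $\bigcap\sigma\subseteq W_{(\pi(\sigma),\ell(q(\sigma)))}$ simultaneously --- exactly the two conditions you flag as ``to be verified.'' (Note also that with a star-refining $\ell$ there is no need to ``collapse'' $(k+1)$-simplices or to trim the order of the intermediate covers; $\dim X\le n$ is used only once, at the very end.) Until you supply an argument of this kind --- per-vertex data whose witnessing sets are simplex-independent, plus a star-refinement to make one vertex's datum govern the whole simplex --- the inductive step, and hence the proof, is incomplete.
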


The proof of Theorem \ref{theorem-nerves-v2:5} is based on special
skeletal selections motivated by the characterisation of canonical
maps in Proposition \ref{proposition-st-app-v11:1}.  Namely, we shall say that a mapping
$\varphi:X\smap Y$ has a \emph{$k$-skeletal selection}, $k\geq 0$, if
there exists and open cover $\mathscr{U}$ of $X$ and a continuous map
$u:|\mathscr{N}^k(\mathscr{U})|\to Y$ such that
\begin{equation}
  \label{eq:st-app-vgg-rev:9}
  u\left(|\Sigma_\mathscr{U}^k(x)|\right)\subset \varphi(x),\quad
  \text{for every $x\in X$.}
\end{equation}
Here, $\Sigma_\mathscr{U}^k(x)$ is the $k$-skeleton of the
simplicial complex $\Sigma_\mathscr{U}(x)$, see
\eqref{eq:st-app-vgg-rev:7}. In fact, just like before, one can
consider $\Sigma_\mathscr{U}^k:X\smap \mathscr{N}^k(\mathscr{U})$ as a
set-valued mapping; similarly for
$|\Sigma_\mathscr{U}^k|:X\smap |\mathscr{N}^k(\mathscr{U})|$. Then a
continuous map $u:|\mathscr{N}^k(\mathscr{U})|\to Y$ is a $k$-skeletal
selection for $\varphi$ if and only if the composite mapping
$u\circ |\Sigma_\mathscr{U}^k|:X\smap Y$ is a set-valued selection for
$\varphi:X\smap Y$, see Remark \ref{remark-st-app-vgg-rev:5}. \medskip

We proceed with the following constructions of $k$-skeletal selections
which furnish the essential part of the proof of Theorem
\ref{theorem-nerves-v2:5}.

\begin{proposition}
  \label{proposition-nerves-v2:2}
  Each lower locally constant mapping $\varphi:X\smap Y$ has a 
  $0$-skeletal selection.
\end{proposition}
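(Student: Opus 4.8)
The plan is to unwind what a $0$-skeletal selection means and observe that, for $k=0$, the simplicial machinery collapses to something elementary. Indeed, $\mathscr{N}^0(\mathscr{U})$ consists precisely of the singletons $\{U\}$, $U\in \mathscr{U}$, so — under the identification of the vertex set with the $0$-skeleton — its geometric realisation $|\mathscr{N}^0(\mathscr{U})|$ is just the set $\mathscr{U}$ with the discrete topology (each geometric simplex $|\{U\}|$ is a point, hence the Whitehead topology is discrete). Likewise, $|\Sigma^0_\mathscr{U}(x)| = \{U\in \mathscr{U}: x\in U\}$ as a subset of $|\mathscr{N}^0(\mathscr{U})|$. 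Consequently, giving a continuous map $u:|\mathscr{N}^0(\mathscr{U})|\to Y$ amounts simply to choosing a point $u(U)\in Y$ for each $U\in \mathscr{U}$, with no continuity constraint, and condition \eqref{eq:st-app-vgg-rev:9} becomes the single requirement that $u(U)\in \varphi(x)$ whenever $x\in U$; equivalently, $u(U)\in \bigcap_{x\in U}\varphi(x)$. So the proposition reduces to producing an open cover $\mathscr{U}$ of $X$ each member of which is ``small enough'' that $\varphi$ takes a common value on it.

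Next I would build such a cover directly from lower local constancy. For each $x\in X$, choose a point $y_x\in \varphi(x)$ (possible since $\varphi(x)\neq \emptyset$). The singleton $\{y_x\}$ is compact, so by the definition of a lower locally constant mapping the set
\[
  O_x=\{x'\in X: y_x\in \varphi(x')\}
\]
is open in $X$, and clearly $x\in O_x$. Hence $\mathscr{U}=\{O_x:x\in X\}$ is an open cover of $X$.

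Finally, I would define $u$ and verify \eqref{eq:st-app-vgg-rev:9}. For each $U\in \mathscr{U}$, fix some $x\in X$ with $U=O_x$ and set $u(U)=y_x$; since $|\mathscr{N}^0(\mathscr{U})|$ is discrete, $u$ is automatically continuous. If $x'\in X$ and $U\in |\Sigma^0_\mathscr{U}(x')|$, i.e.\ $x'\in U=O_x$, then $u(U)=y_x\in \varphi(x')$ by the very definition of $O_x$. Thus $u\big(|\Sigma^0_\mathscr{U}(x')|\big)\subset \varphi(x')$ for every $x'\in X$, so $u$ is a $0$-skeletal selection for $\varphi$.

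I do not expect a serious obstacle here: the only genuine use of the hypothesis is the passage from compactness of a one-point set to openness of $O_x$, and the only point needing any care is the purely notational one of recognising $|\mathscr{N}^0(\mathscr{U})|$ as a discrete set (plus one appeal to the axiom of choice, to pick the witnesses $y_x$ and a defining index for each member of $\mathscr{U}$); everything else is bookkeeping.
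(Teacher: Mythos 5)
Your proof is correct and follows essentially the same route as the paper: choose $y_x\in\varphi(x)$, use lower local constancy on the compact singleton $\{y_x\}$ to get the open cover $\mathscr{U}=\{O_x\}$, and define $u$ on the discrete space $|\mathscr{N}^0(\mathscr{U})|=\mathscr{U}$ by sending each member to the chosen point of a fixed defining index. The extra unwinding of why the $0$-skeleton is discrete and why condition \eqref{eq:st-app-vgg-rev:9} reduces to $u(U)\in\bigcap_{x\in U}\varphi(x)$ is accurate but implicit in the paper's version.
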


\begin{proof}
  For each $x\in X$, take a point $y(x)\in \varphi(x)$, and set
  \begin{equation}
    \label{eq:nerves-v2:3}
    U(x)=\big\{z\in X: y(x)\in \varphi(z)\big\}.
  \end{equation}
  Then $\mathscr{U}=\{U(x):x\in X\}$ is an open cover of
  $X$. Moreover, for each $U\in \mathscr{U}$ there is a point $x_U\in
  X$ with $U=U(x_U)$. Since
  $|\mathscr{N}^0(\mathscr{U})|=\mathscr{U}$, we may define a map
  $u:|\mathscr{N}^0(\mathscr{U})|\to Y$ by $u(U)=y(x_U)$, for each
  $U\in \mathscr{U}$. If $x\in U\in \mathscr{U}$, then $x\in U(x_U)$
  and by (\ref{eq:nerves-v2:3}), we get that $u(U)=y(x_U)\in
  \varphi(x)$.
\end{proof}

\begin{proposition}
  \label{proposition-st-app-vgg-rev:1}
  Let $X$ be a paracompact space, and $\psi:X\smap Y$ be a mapping
  which has a $k$-skeletal selection, for some $k\geq 0$. Then $\psi$
  has a $k$-skeletal selection $u:|\mathscr{N}^k(\mathscr{U})|\to Y$
  for some open locally finite cover $\mathscr{U}$ of $X$. 
\end{proposition}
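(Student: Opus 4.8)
The plan is to transport a given $k$-skeletal selection along a refining map onto a locally finite refinement. So suppose $v:|\mathscr{N}^k(\mathscr{W})|\to Y$ is a $k$-skeletal selection for $\psi$ over an open cover $\mathscr{W}$ of $X$; that is, $v\bigl(|\Sigma_\mathscr{W}^k(x)|\bigr)\subset\psi(x)$ for every $x\in X$, see \eqref{eq:st-app-vgg-rev:9}.

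Since $X$ is paracompact, I would first fix a locally finite open cover $\mathscr{U}$ of $X$ refining $\mathscr{W}$, together with a refining simplicial map $r:\mathscr{N}(\mathscr{U})\to\mathscr{N}(\mathscr{W})$, so that $U\subset r(U)$ for every $U\in\mathscr{U}$. A simplicial map never raises the cardinality of a simplex, $\card(r(\sigma))\le\card(\sigma)$, hence $r$ carries $\mathscr{N}^k(\mathscr{U})$ into $\mathscr{N}^k(\mathscr{W})$; consequently the associated continuous map restricts to a continuous map $|r|:|\mathscr{N}^k(\mathscr{U})|\to|\mathscr{N}^k(\mathscr{W})|$, because each $p\in|\mathscr{N}^k(\mathscr{U})|$ has $\car(p)\in\mathscr{N}^k(\mathscr{U})$ and $|r|(p)\in|r(\car(p))|\subset|\mathscr{N}^k(\mathscr{W})|$. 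Then I would set $u=v\circ|r|:|\mathscr{N}^k(\mathscr{U})|\to Y$, which is continuous.

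It then remains to verify \eqref{eq:st-app-vgg-rev:9} for $u$ and $\mathscr{U}$. Fix $x\in X$. Exactly as in the proof of Corollary \ref{corollary-st-app-vgg-rev:1}, the inclusions $U\subset r(U)$ give $r\bigl(\Sigma_\mathscr{U}(x)\bigr)\subset\Sigma_\mathscr{W}(x)$: if $\sigma\in\Sigma_\mathscr{U}(x)$, i.e.\ $x\in\bigcap\sigma$, then $x\in U\subset r(U)$ for each $U\in\sigma$, so $x\in\bigcap r(\sigma)$. Intersecting with the $k$-skeleton, and using once more that $r$ does not raise cardinality, this yields $r\bigl(\Sigma_\mathscr{U}^k(x)\bigr)\subset\Sigma_\mathscr{W}^k(x)$, whence $|r|\bigl(|\Sigma_\mathscr{U}^k(x)|\bigr)\subset|\Sigma_\mathscr{W}^k(x)|$. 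Therefore
\[
u\bigl(|\Sigma_\mathscr{U}^k(x)|\bigr)=v\Bigl(|r|\bigl(|\Sigma_\mathscr{U}^k(x)|\bigr)\Bigr)\subset v\bigl(|\Sigma_\mathscr{W}^k(x)|\bigr)\subset\psi(x),
\]
which is precisely the required inclusion.

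The argument is essentially routine --- ``pull back along a refining map''. The only steps deserving a little care are the two bookkeeping observations that a simplicial map sends $k$-skeletons into $k$-skeletons, so that both $|r|$ stays within $|\mathscr{N}^k(\mathscr{W})|$ and $r(\Sigma_\mathscr{U}^k(x))$ stays within $\Sigma_\mathscr{W}^k(x)$, together with the appeal to paracompactness for the locally finite refinement.
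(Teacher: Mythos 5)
Your argument is correct and is essentially the paper's own proof: pass to a locally finite open refinement $\mathscr{U}$ of the given cover, pick a refining map $r$, and take $u=v\circ|r|\uhr|\mathscr{N}^k(\mathscr{U})|$, using $r\bigl(\Sigma_\mathscr{U}^k(x)\bigr)\subset\Sigma_\mathscr{W}^k(x)$ to verify \eqref{eq:st-app-vgg-rev:9}. The extra bookkeeping you supply (that a simplicial map does not raise simplex cardinality, hence preserves $k$-skeletons) is exactly what the paper leaves implicit.
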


\begin{proof}
  Let $v:|\mathscr{N}^k(\mathscr{V})|\to Y$ be a $k$-skeletal
  selection for $\psi$, for some open cover $\mathscr{V}$ of
  $X$. Since $X$ is paracompact, the cover $\mathscr{V}$ has an open
  locally finite refinement $\mathscr{U}$. Let
  $r:\mathscr{N}(\mathscr{U})\to \mathscr{N}(\mathscr{V})$ be a refining
  map. Then by \eqref{eq:st-app-vgg-rev:9}, $u=v\circ |r|\uhr
  |\mathscr{N}^k(\mathscr{U})|: |\mathscr{N}^k(\mathscr{U})|\to Y$ is
  a $k$-skeletal selection for $\psi$ because
  $r(\Sigma_\mathscr{U}^k(x))\subset \Sigma_\mathscr{V}^k(x)$, for
  every $x\in X$. 
\end{proof}

A cover $\mathscr{V}$ of $X$ is a \emph{star-refinement} of a cover
$\mathscr{U}$ if the cover
$\mathscr{V}^*=\left\{V^*:V\in \mathscr{V}\right\}$ refines
$\mathscr{U}$, where
$V^*=\bigcup\{W\in \mathscr{V}:W\cap V\neq \emptyset\}$.  To reflect
this property, we shall say that a simplicial map
$\ell:\mathscr{N}(\mathscr{V})\to \mathscr{N}(\mathscr{U})$ is a
\emph{star-refining map} if $V^*\subset \ell(V)$, for each
$V\in \mathscr{V}$. Each star-refining map
$\ell:\mathscr{N}(\mathscr{V})\to \mathscr{N}(\mathscr{U})$ has the
property that
  \begin{equation}
    \label{eq:nerves-v2:2}
    \bigcup\sigma \subset  \bigcap \ell(\sigma),\quad
    \text{for each 
      $\sigma\in \mathscr{N}(\mathscr{V})$.}
  \end{equation}

\begin{proposition}
\label{proposition-nerves-v4:1}
Let $X$ be a paracompact space, $Y$ be a space, and
$\psi,\varphi:X\smap Y$ be such that $\varphi$ is lower locally
constant and $\psi \embed{k}\varphi$ for some $k\geq 0$. If $\psi$ has
a $k$-skeletal selection, then $\varphi$ has a $(k+1)$-skeletal
selection.
\end{proposition}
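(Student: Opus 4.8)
The plan is to start from a $k$-skeletal selection for $\psi$ and, after refining the underlying cover so that it is locally finite (using Proposition \ref{proposition-st-app-vgg-rev:1}) and then passing to a star-refinement, extend the given map simplex-by-simplex over the $(k+1)$-skeleton of the nerve of the refined cover. First I would apply Proposition \ref{proposition-st-app-vgg-rev:1} to obtain a $k$-skeletal selection $v:|\mathscr{N}^k(\mathscr{V})|\to Y$ for $\psi$ with $\mathscr{V}$ open and locally finite, hence (by paracompactness and normality) take an open cover $\mathscr{U}$ that star-refines $\mathscr{V}$, together with a star-refining map $\ell:\mathscr{N}(\mathscr{U})\to\mathscr{N}(\mathscr{V})$ as defined just above the statement, so that \eqref{eq:nerves-v2:2} holds: $\bigcup\sigma\subset\bigcap\ell(\sigma)$ for every $\sigma\in\mathscr{N}(\mathscr{U})$. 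Composing, $w=v\circ|\ell|\uhr|\mathscr{N}^k(\mathscr{U})|:|\mathscr{N}^k(\mathscr{U})|\to Y$ is still a $k$-skeletal selection for $\psi$, and the point of the star-refinement is the following strengthened containment: for each simplex $\sigma\in\mathscr{N}^{k+1}(\mathscr{U})$ and each vertex $U\in\sigma$, the whole image $w(|\mathscr{N}^k(\ell(\sigma))|)$ is contained in $\psi(z)$ for every $z\in U$, because $U\in\bigcup\sigma\subset\bigcap\ell(\sigma)$ forces $\ell(\sigma)\in\Sigma_\mathscr{V}(z)$, so $|\mathscr{N}^k(\ell(\sigma))|\subset|\Sigma^k_\mathscr{V}(z)|$ and \eqref{eq:st-app-vgg-rev:9} applies to $v$. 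In particular, fixing one vertex $U_\sigma\in\sigma$, the restriction of $w$ to $\partial|\sigma|$ (the union of the proper faces, which lies in $|\mathscr{N}^k(\mathscr{U})|$) lands in $\psi(z)\embed{k}\varphi(z)$ for the chosen $z\in U_\sigma$.

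Next I would do the extension. Enumerate the $(k+1)$-dimensional simplices $\sigma$ of $\mathscr{N}(\mathscr{U})$ (there may be many, but local finiteness of $\mathscr{U}$ makes the family locally finite). For each such $\sigma$, choose a vertex $U_\sigma\in\sigma$ and a point $z_\sigma\in U_\sigma$; then $w\uhr\partial|\sigma|$ is a continuous map of a topological $k$-sphere into $\psi(z_\sigma)$, and since $\psi(z_\sigma)\embed{k}\varphi(z_\sigma)$ we may extend it to a continuous map $h_\sigma:|\sigma|\to\varphi(z_\sigma)$ agreeing with $w$ on $\partial|\sigma|$. Here I would be slightly careful: $\partial|\sigma|$ is homeomorphic to $\s^k$ but not literally $\s^k$, so I would either phrase $\embed{k}$ via an arbitrary continuous image of $\s^k$ (which is how the relation is used throughout), or precompose with a fixed homeomorphism $\s^k\to\partial|\sigma|$ and then transport the extension back to $|\sigma|$ through the cone. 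Defining $u:|\mathscr{N}^{k+1}(\mathscr{U})|\to Y$ to equal $w$ on $|\mathscr{N}^k(\mathscr{U})|$ and $h_\sigma$ on each $|\sigma|$ with $\dim\sigma=k+1$, the pieces agree on overlaps (the only overlaps are along the common $k$-skeleton, where everything equals $w$), so $u$ is well defined; continuity follows from the Whitehead topology, since $u$ is continuous on each geometric simplex.

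Finally I would verify that $u$ is a $(k+1)$-skeletal selection for $\varphi$, i.e.\ $u(|\Sigma^{k+1}_\mathscr{U}(x)|)\subset\varphi(x)$ for every $x\in X$. Fix $x$ and a simplex $\sigma\in\Sigma^{k+1}_\mathscr{U}(x)$. If $\dim\sigma\le k$, then $u\uhr|\sigma|=w\uhr|\sigma|$ and $w(|\sigma|)\subset\psi(x)\subset\varphi(x)$ since $w$ is a $k$-skeletal selection for $\psi$ and $\psi\embed{k}\varphi$ entails $\psi(x)\subset\varphi(x)$. If $\dim\sigma=k+1$, then $u\uhr|\sigma|=h_\sigma$, whose image lies in $\varphi(z_\sigma)$; but $x\in\bigcap\sigma$ by \eqref{eq:st-app-vgg-rev:7}, and $z_\sigma$ was taken in $U_\sigma\in\sigma$, so to conclude I need $\varphi(z_\sigma)\subset\varphi(x)$, which is \emph{not} automatic. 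This is the one genuine obstacle, and it is resolved by a standard lower-locally-constant trick: since the compact set $h_\sigma(|\sigma|)\subset\varphi(z_\sigma)$ and $\varphi$ is lower locally constant, the set $O_\sigma=\{z\in X: h_\sigma(|\sigma|)\subset\varphi(z)\}$ is open and contains $z_\sigma$; one then arranges from the start that the point $z_\sigma$ and, more importantly, the neighbourhood data are chosen so that $\bigcap\sigma\subset O_\sigma$. Concretely, I would replace the crude choice above by: first pick $z_\sigma\in U_\sigma$ and build $h_\sigma$; the resulting $h_\sigma(|\sigma|)$ is compact, hence by lower local constancy $O_\sigma$ is an open neighbourhood of $U_\sigma$'s defining point, but we actually want $O_\sigma\supset U_\sigma$ — which we get for free if $U_\sigma$ was itself chosen of the form $U_\sigma=U(z_\sigma)=\{z: (\text{finite compact datum})\subset\varphi(z)\}$ coming from the construction in Proposition \ref{proposition-nerves-v2:2}-style arguments, or, more robustly, by shrinking: replace each $U\in\mathscr{U}$ by $U\cap\bigcap\{O_\sigma: U=U_\sigma\}$ after the fact and re-index, noting this shrinking keeps the cover open and only removes simplices from the nerve, so $u$ restricted to the smaller nerve is still defined and now satisfies $\bigcap\sigma\subset U_\sigma\subset O_\sigma$. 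Either way, once $\bigcap\sigma\subset O_\sigma$ we get $x\in O_\sigma$, hence $h_\sigma(|\sigma|)\subset\varphi(x)$, completing the verification; the hard part, as indicated, is precisely this bookkeeping that ties the "witness point" $z_\sigma$ used for the $\embed{k}$-extension to the stalk condition $\bigcap\sigma\subset O_\sigma$, and lower local constancy of $\varphi$ is exactly what makes it go through.
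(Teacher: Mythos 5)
Your overall strategy --- pass to a locally finite cover, star-refine, extend simplex-by-simplex over the $(k+1)$-cells using $\psi\embed{k}\varphi$, and then invoke lower local constancy of $\varphi$ to propagate the containment from the chosen witness point $z_\sigma$ to the rest of $\bigcap\sigma$ --- is the right one, and you correctly isolate the crux: $h_\sigma(|\sigma|)\subset\varphi(z_\sigma)$ does not give $h_\sigma(|\sigma|)\subset\varphi(x)$ for the other points $x$ with $\sigma\in \Sigma_\mathscr{U}^{k+1}(x)$. But the resolution you offer does not close this gap. The set $O_\sigma=\{z\in X: h_\sigma(|\sigma|)\subset\varphi(z)\}$ is only guaranteed to be an open neighbourhood of the \emph{single point} $z_\sigma$; it need not contain $U_\sigma$, so replacing each $U$ by $U\cap\bigcap\{O_\sigma: U_\sigma=U\}$ can destroy the covering property of $\mathscr{U}$ (and, since a fixed vertex $U$ may belong to infinitely many $(k+1)$-simplices even when $\mathscr{U}$ is locally finite, that intersection need not even be open). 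The alternative suggestion --- that $U_\sigma$ be of the form $\{z: h_\sigma(|\sigma|)\subset\varphi(z)\}$ ``from the start'' --- is circular: the compact set $h_\sigma(|\sigma|)$ only exists after the cover and the boundary data $w$ have been fixed.

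The paper closes the gap by one more round of refinement, in the opposite order from yours. It first performs the extensions over the locally finite cover $\mathscr{U}$, but for \emph{every} admissible witness point: for each $x$ and each $\sigma\in\Sigma_\mathscr{U}^{k+1}(x)$ it fixes an extension $u_{(x,\sigma)}:|\sigma|\to\varphi(x)$ of $u\uhr|\mathscr{N}^k(\sigma)|$. It then collects $K(x)=\bigcup\{u_{(x,\sigma)}(|\sigma|):\sigma\in\Sigma_\mathscr{U}^{k+1}(x)\}$, which is compact by local finiteness, forms the open sets $W_{(x,U)}=\{z\in U: K(x)\subset\varphi(z)\}$ furnished by lower local constancy, and takes a star-refinement $\mathscr{V}$ of the cover $\{W_{(x,U)}\}$ with $V^*\subset W_{(p(V),\ell(V))}$. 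The $(k+1)$-skeletal selection is built on $|\mathscr{N}^{k+1}(\mathscr{V})|$ --- a \emph{new} nerve --- by setting $v\uhr|\sigma|=u_{(\pi(\sigma),\ell(\sigma))}$, where $\pi(\sigma)=p(q(\sigma))$ is read off from a selected vertex $q(\sigma)\in\sigma$; then any $x\in\bigcap\sigma$ lies in $q(\sigma)\subset [q(\sigma)]^*\subset W_{(\pi(\sigma),\ell(q(\sigma)))}$, whence $v(|\sigma|)\subset K(\pi(\sigma))\subset\varphi(x)$. The moral is that you cannot keep the final skeletal selection on (a shrinking of) the same nerve over which the extensions were performed; the definition of a $k$-skeletal selection allows the cover to change, and that freedom is exactly what is needed at this step.
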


\begin{proof}
  By Proposition \ref{proposition-st-app-vgg-rev:1}, $\psi$ has a
  $k$-skeletal selection $u:|\mathscr{N}^k(\mathscr{U})|\to Y$ for
  some open locally finite cover $\mathscr{U}$ of $X$.  For each
  $\sigma\in \mathscr{N}(\mathscr{U})$, let
  $u_\sigma=u\uhr |\mathscr{N}^k(\sigma)|$ be the restriction of $u$
  over the subcomplex
  $|\mathscr{N}^k(\sigma)|=|\sigma|\cap |\mathscr{N}^k(\mathscr{U})|$.
  Then, whenever $\sigma\in \Sigma_\mathscr{U}^{k+1}(x)$ for some
  $x\in X$, the map $u_\sigma$ can be extended to a continuous map
  $u_{(x,\sigma)}:|\sigma|\to Y$ such that
  \begin{equation}
    \label{eq:nerves-v4:1}
    u_{(x,\sigma)}(|\sigma|)\subset \varphi(x). 
  \end{equation}
  Indeed, if $\sigma\in \Sigma_\mathscr{U}^k(x)$, then by
  \eqref{eq:st-app-vgg-rev:9},
  $u(|\sigma|)\subset \psi(x)\subset \varphi(x)$ and we can take
  $u_{(x,\sigma)}=u_\sigma$. If
  $\sigma\notin \Sigma_\mathscr{U}^k(x)$, then
  $| \mathscr{N}^k(\sigma)|=\bigcup\{|\tau|: \emptyset\neq
  \tau\subsetneq \sigma\}$ is homeomorphic to the $k$-sphere being the
  boundary of $|\sigma|$.  Hence, $u_\sigma$ has a continuous
  extension $u_{(x,\sigma)}:|\sigma|\to \varphi(x)$ because
  $u(| \mathscr{N}^k(\sigma)|)\subset u(
  |\Sigma_\mathscr{U}^k(x)|)\subset \psi(x)\embed{k} \varphi(x)$, see
  \eqref{eq:st-app-vgg-rev:9}.\medskip

  Now, whenever $x\in X$, set
  \begin{equation}
    \label{eq:nerves-v3:1}
    K(x)=\bigcup\left\{u_{(x,\sigma)}(|\sigma|):
      \sigma\in \Sigma_\mathscr{U}^{k+1}(x)\right\}.
  \end{equation}
  Then by (\ref{eq:nerves-v4:1}), $K(x)\subset \varphi(x)$; moreover,
  $K(x)$ is compact because $\mathscr{U}$ is locally finite and,
  therefore, $\Sigma_\mathscr{U}^{k+1}(x)$ contains finitely many
  simplices.  Since $\varphi$ is lower locally constant, for each
  $U\in \mathscr{U}$, each point $x\in U$ is contained in the open set
  \begin{equation}
    \label{eq:nerves-v3:3}
    W_{(x,U)}=\big\{z\in U: K(x)\subset \varphi(z)\big\}. 
  \end{equation}
  Since $X$ is paracompact, the cover
  $\{W_{(x,U)}: x\in U\in \mathscr{U}\}$ has an open star-refinement
  $\mathscr{V}$. So, there are maps $p:\mathscr{V}\to X$ and
  ${\ell:\mathscr{V}\to \mathscr{U}}$ such that
  \begin{equation}
    \label{eq:st-app-vgg-rev:4}
    V^*\subset W_{(p(V),\ell(V))},\quad \text{for every $V\in
      \mathscr{V}$.} 
  \end{equation}
  Accordingly, $\ell$ is a star-refining map because by
  \eqref{eq:nerves-v3:3},
  $V^*\subset W_{(p(V),\ell(V))}\subset \ell(V)$. Finally, take a map
  $q:\mathscr{N}(\mathscr{V})\to \mathscr{V}$ which selects from any
  simplex $\sigma\in \mathscr{N}(\mathscr{V})$ a vertex
  $q(\sigma)\in\sigma$, and next set
  $\pi=p\circ q:\mathscr{N}(\mathscr{V})\to X$. Then 
  \begin{equation}
    \label{eq:st-app-vgg-rev:2}
    \ell(\sigma)\in \Sigma_\mathscr{U}(\pi(\sigma)),\quad \sigma\in
    \mathscr{N}(\mathscr{V}), 
  \end{equation}
  because
  $\pi(\sigma)=p(q(\sigma))\in q(\sigma)\subset\bigcup \sigma\subset
  \bigcap \ell(\sigma)$, see (\ref{eq:nerves-v2:2}).\medskip

  We complete the proof as follows. Using \eqref{eq:nerves-v4:1} and
  \eqref{eq:st-app-vgg-rev:2}, one can define a continuous extension
  $v:|\mathscr{N}^{k+1}(\mathscr{V})|\to Y$ of the map
  $u\circ |\ell|\uhr
  |\mathscr{N}^k(\mathscr{V})|:|\mathscr{N}^k(\mathscr{V})|\to Y$ by
  $v\uhr |\sigma|=u_{(\pi(\sigma),\ell(\sigma))}$, for every
  $\sigma\in \mathscr{N}^{k+1}(\mathscr{V})$.  This $v$ is a
  $(k+1)$-skeletal selection for $\varphi$. Indeed, let
  $\sigma\in \Sigma_\mathscr{V}^{k+1}(x)$ for some $x\in X$. Then
  $x\in q(\sigma)$ because $q(\sigma)\in \sigma$, see
  \eqref{eq:st-app-vgg-rev:7}. Moreover, by
  \eqref{eq:st-app-vgg-rev:4},
  $q(\sigma)\subset [q(\sigma)]^*\subset
  W_{(\pi(\sigma),\ell(q(\sigma)))}$. Hence, by \eqref{eq:nerves-v3:1},
 \eqref{eq:nerves-v3:3} and \eqref{eq:st-app-vgg-rev:2},
  $ v(|\sigma|)=u_{(\pi(\sigma),\ell(\sigma))}(|\ell(\sigma)|)\subset
  K(\pi(\sigma))\subset \varphi(x)$.
\end{proof}


\begin{proof}[Proof of Theorem \ref{theorem-nerves-v2:5}]
  According to Propositions \ref{proposition-nerves-v2:2} and
  \ref{proposition-nerves-v4:1}, the mapping $\varphi_{n}$ has an
  $n$-skeletal selection $u:|\mathscr{N}^{n}(\mathscr{U})|\to Y$,
  for some open cover $\mathscr{U}$ of $X$.  Since $X$ is paracompact
  and $\dim(X)\le n$, the cover $\mathscr{U}$ has an open refinement
  $\mathscr{V}$ with
  $\mathscr{N}(\mathscr{V})=\mathscr{N}^{n}(\mathscr{V})$, see
  Remark \ref{remark-st-app-vgg-rev:4}. Let
  $r:\mathscr{N}(\mathscr{V})\to \mathscr{N}^{n}(\mathscr{U})$ be a
  refining map, and $g:X\to |\mathscr{N}(\mathscr{V})|$ be a canonical
  map for $\mathscr{V}$ which exists because $X$ is paracompact, see
  Theorems \ref{theorem-st-app-v3:1} and
  \ref{theorem-st-app-v12:1}. Then by Corollary
  \ref{corollary-st-app-vgg-rev:1}, the composite map
  $h=|r|\circ g:X\to |\mathscr{N}^{n}(\mathscr{U})|$ is a canonical
  map for $\mathscr{U}$. Finally, by \eqref{eq:st-app-vgg-rev:9} and
  Proposition \ref{proposition-st-app-v11:1}, the composite map
  $f=u\circ h:X\to Y$
\begin{center}
  \begin{tikzcd}
    &\lvert\mathscr{N}^{n}(\mathscr{U})\rvert \arrow[d, "u"]\\
    {X} \arrow[ur,  "h"] \arrow[r, rightsquigarrow, "\varphi"]
    & Y
  \end{tikzcd}
\end{center}
is a continuous selection for $\varphi$.
\end{proof}

\begin{remark}
  \label{remark-st-app-vgg-rev:4}
  Let $n\geq -1$ be an integer and $X$ be a normal space.  The
  \emph{order} of a cover $\mathscr{V}$ of $X$ doesn't exceed $n$ if
  $\bigcap\sigma=\emptyset$, for every $\sigma\subset \mathscr{V}$
  with $\card(\sigma)\geq n+2$; equivalently, if
  $\mathscr{N}(\mathscr{V})=\mathscr{N}^n(\mathscr{V})$. In these
  terms, the covering dimension of $X$ is at most $n$, written
  $\dim(X)\leq n$, if every finite open cover of $X$ has an open
  refinement $\mathscr{V}$ with
  $\mathscr{N}(\mathscr{V})=\mathscr{N}^n(\mathscr{V})$. According to
  a result of Dowker \cite[Theorem 3.5]{dowker:47}, $\dim(X)\leq n$ if
  and only if every locally finite open cover of $X$ has an open
  refinement $\mathscr{V}$ with
  $\mathscr{N}(\mathscr{V})=\mathscr{N}^n(\mathscr{V})$. In
  particular, for a paracompact space $X$, we have that
  $\dim(X)\leq n$ if and only if every open cover of $X$ has an open
  refinement $\mathscr{V}$ with
  $\mathscr{N}(\mathscr{V})=\mathscr{N}^n(\mathscr{V})$.
\end{remark}

\begin{remark}
  \label{remark-st-app-vgg-rev:5}
  A mapping $\psi:X\smap Y$ is a \emph{set-valued selection} (or
  \emph{set-selection}, or \emph{multi-selection}) for
  $\varphi:X\smap Y$ if $\psi(x)\subset \varphi(x)$, for all $x\in
  X$. In terms of set-valued selections, a mapping $\varphi:X\smap Y$
  has a $k$-skeletal selection, $k\geq 0$, if there exists an open
  cover $\mathscr{U}$ of $X$ and a continuous map
  $u:|\mathscr{N}^k(\mathscr{U})|\to Y$ such that the composite
  mapping $u\circ |\Sigma_\mathscr{U}^k|:X\smap Y$ 
\begin{center}
  \begin{tikzcd}
    &\lvert\mathscr{N}^{k}(\mathscr{U})\rvert \arrow[d,
    "u"]\\
    {X} \arrow[ur, rightsquigarrow, "\lvert\Sigma_\mathscr{U}^k\rvert"] \arrow[r,
    rightsquigarrow, "\varphi"] & Y
  \end{tikzcd}
\end{center}
is a set-valued selection for $\varphi:X\smap Y$.
\end{remark}

\section{Generating aspherical sequences of sets}
\label{sec:gener-asph-sequ}

For a point $y\in Y$ of a metric space $(Y,d)$ and $\varepsilon>0$,
let
\[
\mathbf{O}_\varepsilon(y)=\{z\in Y: d(z,y)<\varepsilon\}
\]
be the open $\varepsilon$-ball centred at $y$; and
$\mathbf{O}_\varepsilon(S)=\bigcup_{y\in S}\mathbf{O}_\varepsilon(y)$
be the $\varepsilon$-neighbourhood of a subset $S\subset Y$. Also,
recall that a map $f:X\to Y$ is an \emph{$\varepsilon$-selection} for
a mapping $\varphi:X\smap Y$ if $f(x)\in
\mathbf{O}_{\varepsilon}(\varphi(x))$ for every $x\in X$.\medskip

Throughout this section, $\delta:(0,+\infty)\to (0,+\infty)$ is a
fixed function. To this function, we associate the sequence of
iterated functions $\delta_n:(0,+\infty)\to (0,+\infty)$, $ n\geq 0$,
defined by
\begin{equation}
  \label{eq:st-app-v7:2}
  \delta_{0}(\varepsilon)=\varepsilon\quad\text{and}\quad
  \delta_{n+1}(\varepsilon)= \delta(\delta_{n}(\varepsilon)). 
\end{equation}

\begin{proposition}
  \label{proposition-st-app-v7:1}
  Let $(Y,d)$ be a metric space and
  ${S_0\subset S_1\subset\dots\subset S_{n}\subset Y}$
  be such that
  $\mathbf{O}_{\delta(\varepsilon)}(y)\cap S_k\embed{k}
  \mathbf{O}_\varepsilon(y)\cap S_{k+1}$, for every $y\in Y$ and
  $k< n$. Then
    \begin{equation}
      \label{eq:st-app-v7:3}
      \mathbf{O}_{\delta_{n-k}(\varepsilon)}(y)\cap S_k\embed{k}
      \mathbf{O}_{\delta_{n-k-1}(\varepsilon)}(y)\cap S_{k+1},\quad  k< n.
  \end{equation}
\end{proposition}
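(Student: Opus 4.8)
The plan is to notice that \eqref{eq:st-app-v7:3} is not really a new assertion: it is the hypothesis of the proposition, evaluated at a shifted value of the parameter $\varepsilon$. So no induction is required and the relation $S\embed{k}B$ is used only as a black box; the whole content is bookkeeping with the recursion \eqref{eq:st-app-v7:2}.

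In detail, I would proceed as follows. Fix $k<n$, a point $y\in Y$, and $\varepsilon>0$. Since $k\le n-1$, the integer $n-k-1$ is nonnegative, so by \eqref{eq:st-app-v7:2} the number $\varepsilon'=\delta_{n-k-1}(\varepsilon)$ is a well-defined element of $(0,+\infty)$. Applying the hypothesis of the proposition with the centre $y$, the same index $k$, and $\varepsilon'$ playing the role of $\varepsilon$, one obtains
\[
\mathbf{O}_{\delta(\varepsilon')}(y)\cap S_k\ \embed{k}\ \mathbf{O}_{\varepsilon'}(y)\cap S_{k+1}.
\]
It remains only to identify the two radii. By the recursion in \eqref{eq:st-app-v7:2}, $\delta(\varepsilon')=\delta\big(\delta_{n-k-1}(\varepsilon)\big)=\delta_{n-k}(\varepsilon)$, while $\varepsilon'=\delta_{n-k-1}(\varepsilon)$ by definition; substituting these into the displayed relation gives precisely \eqref{eq:st-app-v7:3}. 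As a consistency check, the case $k=n-1$ reproduces the hypothesis verbatim, since then $\delta_{n-k}(\varepsilon)=\delta_1(\varepsilon)=\delta(\varepsilon)$ and $\delta_{n-k-1}(\varepsilon)=\delta_0(\varepsilon)=\varepsilon$.

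I do not expect a genuine obstacle here; the only thing to watch is the index bookkeeping with the iterates $\delta_m$ --- namely that $n-k-1\ge0$ so that $\delta_{n-k-1}(\varepsilon)$ is defined, and that $\delta\circ\delta_{n-k-1}=\delta_{n-k}$ by \eqref{eq:st-app-v7:2}. The point of stating the result in this form is packaging: fixing $y$ and $\varepsilon$ and writing $T_k=\mathbf{O}_{\delta_{n-k}(\varepsilon)}(y)\cap S_k$, relation \eqref{eq:st-app-v7:3} says exactly that $T_k\embed{k}T_{k+1}$ for every $k<n$, i.e.\ $(T_k)_{k\le n}$ is an aspherical sequence of sets whose top term is $T_n=\mathbf{O}_\varepsilon(y)\cap S_n$, which is the shape in which such data can later be fed into Theorem \ref{theorem-nerves-v2:5}.
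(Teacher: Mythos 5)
Your proposal is correct and is exactly the paper's own argument: the paper's one-line proof simply invokes the identity $\delta_{n-k}(\varepsilon)=\delta\big(\delta_{n-k-1}(\varepsilon)\big)$ from \eqref{eq:st-app-v7:2}, which is precisely your substitution of $\varepsilon'=\delta_{n-k-1}(\varepsilon)$ into the (implicitly universally quantified in $\varepsilon$) hypothesis. Your version just spells out the bookkeeping more explicitly.
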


\begin{proof}
  Follows from the fact that
  $\delta_{n-k}(\varepsilon)=\delta\big(\delta_{n-k-1}(\varepsilon)\big)$,
  see (\ref{eq:st-app-v7:2}).
\end{proof}

We now have the following ``local'' version of Theorem
\ref{theorem-nerves-v2:5}.

\begin{theorem}
  \label{theorem-st-app-vgg-rev:2}
  Let $(Y,d)$ be a metric space, $X$ be a paracompact space with
  $\dim(X)\leq n$, and $\psi_k:X\smap Y$, $0\leq k\leq n$, be
  lower locally constant mappings such that
  $\mathbf{O}_{\delta(\varepsilon)}(y)\cap\psi_k(x)\embed{k}
  \mathbf{O}_\varepsilon(y)\cap \psi_{k+1}(x)$ for every $x\in X$,
  $y\in Y$ and $k< n$. Then for each continuous
  $\delta_{n}(\varepsilon)$-selection $g:X\to Y$ for $\psi_0$, there
  is a continuous selection $f:X\to Y$ for $\psi_{n}$ with
  $d(f(x),g(x))<\varepsilon$, for all $x\in X$.
\end{theorem}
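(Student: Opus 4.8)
The plan is to deduce the statement from Theorem \ref{theorem-nerves-v2:5} by cutting the mappings $\psi_k$ down to shrinking metric neighbourhoods of the given $\delta_{n}(\varepsilon)$-selection $g$. Precisely, I would set
\[
  \varphi_k(x)=\mathbf{O}_{\delta_{n-k}(\varepsilon)}(g(x))\cap\psi_k(x),
  \qquad x\in X,\ 0\le k\le n,
\]
and verify that $(\varphi_k)_{k=0}^{n}$ is an aspherical sequence of lower locally constant mappings $X\smap Y$. Theorem \ref{theorem-nerves-v2:5} then gives a continuous selection $f:X\to Y$ for $\varphi_n$, and since $\delta_{0}(\varepsilon)=\varepsilon$ we get $f(x)\in\mathbf{O}_{\varepsilon}(g(x))\cap\psi_n(x)$ for every $x\in X$, which is precisely a continuous selection for $\psi_n$ that is $\varepsilon$-close to $g$.

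To see that the sequence is aspherical, I would first note that $\psi_k(x)\subset\psi_{k+1}(x)$ for all $x\in X$ and $k<n$: for $z\in\psi_k(x)$ and any $\varepsilon'>0$ the hypothesis $\mathbf{O}_{\delta(\varepsilon')}(z)\cap\psi_k(x)\embed{k}\mathbf{O}_{\varepsilon'}(z)\cap\psi_{k+1}(x)$ gives $z\in\psi_{k+1}(x)$, because $\embed{k}$ implies $\subset$. Hence, for each fixed $x\in X$, the chain $S_0=\psi_0(x)\subset\dots\subset S_n=\psi_n(x)$ meets the hypotheses of Proposition \ref{proposition-st-app-v7:1}, which applied with $y=g(x)$ yields
\[
  \mathbf{O}_{\delta_{n-k}(\varepsilon)}(g(x))\cap\psi_k(x)\ \embed{k}\ \mathbf{O}_{\delta_{n-k-1}(\varepsilon)}(g(x))\cap\psi_{k+1}(x),\qquad k<n,
\]
i.e.\ $\varphi_k\embed{k}\varphi_{k+1}$. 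In particular $\varphi_0(x)\subset\varphi_1(x)\subset\dots\subset\varphi_n(x)$, and since $g$ is a $\delta_{n}(\varepsilon)$-selection for $\psi_0$ the set $\varphi_0(x)=\mathbf{O}_{\delta_{n}(\varepsilon)}(g(x))\cap\psi_0(x)$ is nonempty; therefore each $\varphi_k(x)$ is a nonempty subset of $Y$, so $\varphi_k:X\smap Y$ is a genuine set-valued mapping.

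It remains to check that each $\varphi_k$ is lower locally constant, which is the only point requiring a small argument. Given a compact $K\subset Y$, write
\[
  \{x\in X:K\subset\varphi_k(x)\}=\{x\in X:K\subset\psi_k(x)\}\cap\{x\in X:K\subset\mathbf{O}_{\delta_{n-k}(\varepsilon)}(g(x))\}.
\]
The first set is open because $\psi_k$ is lower locally constant. For the second, the function $\rho(w)=\max_{z\in K}d(w,z)$ is well defined and continuous on $Y$ (the maximum is attained by compactness of $K$), and $K\subset\mathbf{O}_{\delta_{n-k}(\varepsilon)}(g(x))$ is equivalent to $\rho(g(x))<\delta_{n-k}(\varepsilon)$; since $g$ is continuous, this set is open as well. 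With this, Theorem \ref{theorem-nerves-v2:5} applies to the aspherical sequence $\varphi_0,\dots,\varphi_n$ over the paracompact space $X$ with $\dim(X)\le n$, and the selection $f$ it produces satisfies $f(x)\in\varphi_n(x)\subset\psi_n(x)$ and $d(f(x),g(x))<\delta_{0}(\varepsilon)=\varepsilon$, as required. I do not expect a genuine obstacle; the only delicate point is the bookkeeping of the iterates $\delta_{n-k}(\varepsilon)$ along the sequence, and that is exactly what Proposition \ref{proposition-st-app-v7:1} is designed to absorb.
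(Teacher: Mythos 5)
Your proposal is correct and follows essentially the same route as the paper: the same truncated mappings $\varphi_k(x)=\mathbf{O}_{\delta_{n-k}(\varepsilon)}(g(x))\cap\psi_k(x)$, asphericity via Proposition \ref{proposition-st-app-v7:1}, and an application of Theorem \ref{theorem-nerves-v2:5}. The only (harmless) difference is that you verify lower local constancy of $x\mapsto\mathbf{O}_{\delta_{n-k}(\varepsilon)}(g(x))$ by hand, where the paper invokes Proposition \ref{proposition-st-app-v15:2}.
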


\begin{proof}
  Let $g:X\to Y$ be a continuous $\delta_{n}(\varepsilon)$-selection
  for $\psi_0$. Next, for each $k\leq n$, define a set-valued mapping
  $\varphi_k$ by
  $\varphi_k(x)=\mathbf{O}_{\delta_{n-k}(\varepsilon)}(g(x))\cap
  \psi_k(x)$, $x\in X$. Since $g$ is a
  $\delta_{n}(\varepsilon)$-selection for $\psi_0$, the mapping
  $\varphi_0$ is nonempty-valued and, according to
  (\ref{eq:st-app-v7:3}), so is each $\varphi_k$, $k\leq n$. In fact,
  by (\ref{eq:st-app-v7:3}), the resulting sequence of mappings
  $\varphi_k:X\smap Y$, $0\leq k\leq n$, is aspherical.  Moreover,
  each $\varphi_k$ is lower locally constant because so are $\psi_k$
  and the mapping $x\to \mathbf{O}_{\delta_{n-k}(\varepsilon)}(g(x))$,
  $x\in X$ (see Proposition \ref{proposition-st-app-v15:2}).  Hence,
  by Theorem \ref{theorem-nerves-v2:5}, $\varphi_{n}$ has a continuous
  selection $f:X\to Y$ because $X$ is a paracompact space with
  $\dim(X)\leq n$. Evidently, $f$ is a selection for
  $\psi_{n}$ and, by (\ref{eq:st-app-v7:2}),
  $f(x)\in \mathbf{O}_{\delta_{n-n}(\varepsilon)}(g(x))=
  \mathbf{O}_{\delta_{0}(\varepsilon)}(g(x))=\mathbf{O}_\varepsilon(g(x))$,
  $x\in X$.
\end{proof}

We conclude this section with the following two applications of Theorem
\ref{theorem-st-app-vgg-rev:2} which will provide the main interface
between selections for l.s.c.\ mappings and Theorem
\ref{theorem-nerves-v2:5}, see Theorems \ref{theorem-st-app-v9:1} and
\ref{theorem-st-app-v17:1}.

\begin{corollary}
  \label{corollary-st-app-v7:2}
  Let $E$ be a normed space, and $\emptyset\neq S\subset T\subset E$
  be such that $S\embed{k} T$ and
  $\mathbf{O}_{\delta(\varepsilon)}(y)\cap S\embed{i}
  \mathbf{O}_\varepsilon(y)\cap S$, for every $y\in E$ and
  $0\leq i< k$. Then
  $\mathbf{O}_{\delta_k(\varepsilon)}(S)\embed{k}
  \mathbf{O}_\varepsilon(T)$.
\end{corollary}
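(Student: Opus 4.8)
The plan is to deduce this from Theorem \ref{theorem-st-app-vgg-rev:2} by choosing the right paracompact domain and the right aspherical sequence. The natural choice of domain is $X=\mathbf{O}_{\delta_k(\varepsilon)}(S)$ itself: it is a metric space, hence paracompact, but it has dimension possibly larger than $k$, so I cannot apply the theorem directly to $X$. Instead I will argue that it suffices to extend continuous maps of the $k$-sphere, i.e.\ I only need to run the machinery over domains of dimension $\le k$. So let $\B=\B^{k+1}$ be the $(k+1)$-ball, let $\mathbf{S}=\mathbf{S}^k=\partial\B$, and let $g_0:\mathbf{S}\to \mathbf{O}_{\delta_k(\varepsilon)}(S)$ be an arbitrary continuous map; the goal is to extend $g_0$ to a continuous map $\B\to \mathbf{O}_\varepsilon(T)$. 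First I would use the hypothesis $S\embed{k}T$ together with the ``small'' versions $\mathbf{O}_{\delta(\varepsilon')}(y)\cap S\embed{i}\mathbf{O}_{\varepsilon'}(y)\cap S$ for $i<k$ to assemble, on the ball $\B$, an aspherical sequence of lower locally constant mappings into $E$ whose last term is $\embed{k}$-below $T$ and whose first term contains a prescribed approximation of $g_0$.

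Concretely, on $X=\B$ I would define $\psi_i(p)=S$ for $i<k$ and $\psi_k(p)=T$ for every $p\in\B$ (constant mappings, hence trivially lower locally constant). The aspherical condition of Theorem \ref{theorem-st-app-vgg-rev:2} then becomes exactly: $\mathbf{O}_{\delta(\varepsilon')}(y)\cap S\embed{i}\mathbf{O}_{\varepsilon'}(y)\cap S$ for $i<k-1$, $\mathbf{O}_{\delta(\varepsilon')}(y)\cap S\embed{k-1}\mathbf{O}_{\varepsilon'}(y)\cap T$ for $i=k-1$ (which follows from the case $i=k-1$ of the hypothesis combined with $\mathbf{O}_{\varepsilon'}(y)\cap S\subset\mathbf{O}_{\varepsilon'}(y)\cap T$). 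Here $\varepsilon'$ ranges over the relevant values; since $\delta$ is a fixed function, the hypotheses hold for all positive $\varepsilon'$, so this is fine. Now extend $g_0:\mathbf{S}\to\mathbf{O}_{\delta_k(\varepsilon)}(S)$ to a continuous map $g:\B\to\mathbf{O}_{\delta_k(\varepsilon)}(S)\subset E$ (the target is an open subset of the convex set $\mathbf{O}_{\delta_k(\varepsilon)}(S)$? — not convex in general, so instead simply extend $g_0$ to a continuous map $g:\B\to E$ by Dugundji's extension theorem, then note its image lies in a controlled set); more carefully, pick for each $p\in\B$ a point $s(p)\in S$ realising $g_0$-closeness and interpolate. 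The point is to produce a continuous $g:\B\to E$ which is a $\delta_k(\varepsilon)$-selection for $\psi_0\equiv S$, i.e.\ $g(p)\in\mathbf{O}_{\delta_k(\varepsilon)}(S)$ for all $p$, and which agrees with $g_0$ on $\mathbf{S}$. Applying Theorem \ref{theorem-st-app-vgg-rev:2} with $n=k$ on the $k$-dimensional paracompact space $\B$ (note $\dim\B^{k+1}=k+1$, not $\le k$ — so this needs adjustment: apply it on $\mathbf{S}=\mathbf{S}^k$, which has $\dim=k$) yields a selection, and then one glues.

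Because of the dimension bookkeeping, the cleaner route is: run Theorem \ref{theorem-st-app-vgg-rev:2} with $X=\mathbf{S}^k$ (so $\dim X=k$) to get, from the continuous $\delta_k(\varepsilon)$-selection $g_0:\mathbf{S}^k\to E$ for the constant mapping $S$, a continuous selection $f_0:\mathbf{S}^k\to S$ for $\psi_k$ — wait, $\psi_k\equiv T$, so $f_0:\mathbf{S}^k\to T$ — with $d(f_0(p),g_0(p))<\varepsilon$ for all $p$. Then use $S\embed{k}T$: since $f_0$ maps $\mathbf{S}^k$ into... no, $f_0$ maps into $T$, not $S$. The right statement to use is that the sequence $S,\dots,S,T$ is aspherical over $\mathbf{S}^k$ and that $g_0$ itself, being $\mathbf{O}_{\delta_k(\varepsilon)}(S)$-valued, is the required approximate selection; Theorem \ref{theorem-st-app-vgg-rev:2} then gives $f_0:\mathbf{S}^k\to T$ with $\|f_0-g_0\|<\varepsilon$, i.e.\ $f_0$ is a genuine map of the $k$-sphere into $T$ that is $\varepsilon$-close to $g_0$. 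I expect the main obstacle to be precisely this dimension/gluing step — showing that having the theorem on $\mathbf{S}^k$ suffices to extend $g_0$ over $\B^{k+1}$ into $\mathbf{O}_\varepsilon(T)$. The resolution: $f_0$ and $g_0$ are $\varepsilon$-close maps $\mathbf{S}^k\to\mathbf{O}_\varepsilon(T)$ (using $T\subset\mathbf{O}_\varepsilon(T)$ and $g_0(\mathbf{S}^k)\subset\mathbf{O}_{\delta_k(\varepsilon)}(S)\subset\mathbf{O}_\varepsilon(T)$, since $\delta_k(\varepsilon)$ need not be $\le\varepsilon$ — so this inclusion fails and one must be careful, possibly replacing $\varepsilon$ by $\max(\varepsilon,\delta_k(\varepsilon))$ or arguing via the straight-line homotopy inside $\mathbf{O}_\varepsilon(T)$, which is open in $E$); since $f_0$ extends over $\B^{k+1}$ into $T$ (as $S\embed{k}T$ applied to... no — $f_0$ lands in $T$, need a sphere-in-$S$ map) — the honest fix is to arrange $f_0$ to land in $S$ by taking $\psi_k\equiv S$ and using $S\embed{k}T$ only at the very end to extend the resulting $\mathbf{S}^k\to S$ map over the ball into $T$, then concatenating with the straight-line homotopy from $g_0$ to $f_0$ inside the open set $\mathbf{O}_\varepsilon(T)$ to get the desired extension $\B^{k+1}\to\mathbf{O}_\varepsilon(T)$ of $g_0$. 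This homotopy-concatenation over $\B^{k+1}\cong \mathbf{S}^k\times[0,1]\cup_{\mathbf{S}^k\times\{1\}}\B^{k+1}$ is the technical heart, and I would write it out explicitly.
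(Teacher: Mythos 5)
After the false starts, your final ``honest fix'' is exactly the paper's argument: apply Theorem \ref{theorem-st-app-vgg-rev:2} on $X=\s^k$ to the constant sequence $\psi_0=\dots=\psi_k=S$ to obtain $q:\s^k\to S$ with $\|q-\ell\|<\varepsilon$, run the linear homotopy from $\ell$ to $q$ inside $\mathbf{O}_\varepsilon(S)\subset\mathbf{O}_\varepsilon(T)$, and then use $S\embed{k}T$ to null-homotope $q$ in $T$. This is correct and essentially identical to the paper's proof, including the resolution of your worry about $\mathbf{O}_{\delta_k(\varepsilon)}(S)\not\subset\mathbf{O}_\varepsilon(T)$ via the observation that the straight-line homotopy stays within $\varepsilon$ of $S$.
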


\begin{proof}
  Let $\ell:\s^{k}\to \mathbf{O}_{\delta_k(\varepsilon)}(S)$ be a
  continuous map from the $k$-sphere $\s^k$. Consider the constant
  mappings $\psi_i(x)=S$, $x\in\s^k$ and $i\leq k$. Then $\ell$ is a
  continuous $\delta_k(\varepsilon)$-selection for $\psi_0$, and
  $\mathbf{O}_{\delta(\varepsilon)}(y)\cap \psi_i(x)\embed{i}
  \mathbf{O}_\varepsilon(y)\cap \psi_{i+1}(x)$ for every $x\in \s^k$,
  $y\in E$ and $i< k$. Hence, by Theorem
  \ref{theorem-st-app-vgg-rev:2}, there exists a continuous $q:\s^{k}\to
  S$ with $\|q(x)-\ell(x)\|<\varepsilon$, for every $x\in\s^k$. Let
  $h_{1}$ be the linear homotopy between $\ell$ and $q$, i.e.\
  $h_{1}(x,t)=t q(x)+ (1-t)\ell(x)$, whenever $(x,t)\in \s^{k}\times
  [0,1]$.  Then, $h_{1}(\s^{k}\times [0,1])\subset
  \mathbf{O}_{\varepsilon}(S)\subset
  \mathbf{O}_{\varepsilon}(T)$. Also, let $h_2:\s^k\times[0,1]\to T$
  be a homotopy between $q$ and a constant map, which exists because
  $S\embed{k}T$.  Finally, take $h$ to be the homotopy obtained by
  combining $h_{1}$ and $h_{2}$. Then $h$ is a homotopy of $\ell$ with
  a constant map over a subset of $\mathbf{O}_{\varepsilon}(T)$.
\end{proof}

\begin{corollary}
  \label{corollary-st-app-v8:1}
  Let $E$ be a normed space, and $\emptyset\neq S\subset T\subset E$
  be such that
  $\mathbf{O}_{\delta(\varepsilon)}(y)\cap T\embed{k}
  \mathbf{O}_\varepsilon(y)\cap T$ and
  $\mathbf{O}_{\delta(\varepsilon)}(y)\cap S\embed{i}
  \mathbf{O}_\varepsilon(y)\cap S$, for every $y\in E$ and
  $0\leq i< k$.  Define functions
  \begin{equation}
    \label{eq:st-app-v8:1}
    \eta(\varepsilon)=\delta(\varepsilon)/2\quad\text{and} \quad
    \lambda(\varepsilon,\mu)=\delta_k\big(\min\left\{\eta(\varepsilon),
      \mu\right\}\big),\
    \varepsilon,\mu>0.      
  \end{equation}
  Then $\mathbf{O}_{\eta(\varepsilon)}(y)\cap
  \mathbf{O}_{\lambda(\varepsilon,\mu)}(S)\embed{k}
  \mathbf{O}_\varepsilon(y)\cap \mathbf{O}_\mu(T)$, for every $y\in
  E$.
\end{corollary}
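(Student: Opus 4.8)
The plan is to reduce the assertion to Corollary \ref{corollary-st-app-v7:2} applied with the pair $S\subset T$ and a suitably chosen scaling function, then intersect the resulting extension with a metric ball and use the convexity of balls in a normed space. First I would fix $y\in E$ and a continuous map $\ell:\s^k\to \mathbf{O}_{\eta(\varepsilon)}(y)\cap \mathbf{O}_{\lambda(\varepsilon,\mu)}(S)$; the goal is to fill $\ell$ with a continuous map of $\B^{k+1}$ into $\mathbf{O}_\varepsilon(y)\cap \mathbf{O}_\mu(T)$. Set $\mu'=\min\{\eta(\varepsilon),\mu\}$, so that $\lambda(\varepsilon,\mu)=\delta_k(\mu')$ and $\mathbf{O}_{\lambda(\varepsilon,\mu)}(S)=\mathbf{O}_{\delta_k(\mu')}(S)$. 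The hypotheses on $S$ and $T$ are exactly those of Corollary \ref{corollary-st-app-v7:2} with $\varepsilon$ there replaced by $\mu'$ (note $S\embed{k}T$ follows from $\mathbf{O}_{\delta(\varepsilon)}(y)\cap T\embed{k}\mathbf{O}_\varepsilon(y)\cap T$ after letting the radii absorb $T$, or can be read off directly); hence $\mathbf{O}_{\delta_k(\mu')}(S)\embed{k}\mathbf{O}_{\mu'}(T)$. So $\ell$ extends to a continuous map $h_0:\B^{k+1}\to \mathbf{O}_{\mu'}(T)\subset \mathbf{O}_\mu(T)$.

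The point $h_0$ need not land in $\mathbf{O}_\varepsilon(y)$, so the second step is a radial correction toward $y$. Since $\ell$ already takes values in $\mathbf{O}_{\eta(\varepsilon)}(y)$, and $\mathbf{O}_{\mu'}(T)\subset \mathbf{O}_{\eta(\varepsilon)}(T)$, I would use the nearest-point-to-$y$ idea via a retraction of the ball $\mathbf{O}_{\varepsilon}(y)$ onto itself that is the identity near its center: more concretely, define $h(z)= y + r(z)\,\big(h_0(z)-y\big)/\max\{1,\|h_0(z)-y\|/\varepsilon\}$ — that is, radially clip $h_0(z)$ to lie within distance $<\varepsilon$ of $y$. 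Because $E$ is a normed space, this clipping map $w\mapsto y+(w-y)/\max\{1,\|w-y\|/\varepsilon\}$ is continuous, fixes every point already in $\mathbf{O}_\varepsilon(y)$ (in particular all values of $\ell$, since $\eta(\varepsilon)<\varepsilon$), and maps into $\mathbf{O}_\varepsilon(y)$. The remaining issue is that clipping might push the value out of $\mathbf{O}_\mu(T)$; this is where the choice $\eta(\varepsilon)=\delta(\varepsilon)/2$ and the radius $\mu'\le \eta(\varepsilon)$ are used. For $z$ with $\|h_0(z)-y\|<\varepsilon$ nothing changes; for $z$ with $\|h_0(z)-y\|\ge \varepsilon$ the clipped point lies on the segment from $y$ to $h_0(z)$ at distance exactly $<\varepsilon$ from $y$, hence within distance $\eta(\varepsilon)\le \mu$ of $y$ in the worst case — and since $h_0(z)\in \mathbf{O}_{\mu'}(T)$ with $\mu'\le\eta(\varepsilon)$, the whole segment from $y$ to $h_0(z)$, once we know $y$ itself is within $\eta(\varepsilon)+\mu'\le 2\eta(\varepsilon)=\delta(\varepsilon)$ of $T$...

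Here I expect the main obstacle: controlling the distance of the clipped point to $T$ requires that every point of the segment $[y,h_0(z)]$ that we actually use lies in $\mathbf{O}_\mu(T)$, and the naive bound only gives $\mathbf{O}_{\delta(\varepsilon)}(T)$, not $\mathbf{O}_\mu(T)$. The fix is to observe we never need points of that segment beyond distance $\eta(\varepsilon)$ from $y$: the clip radius is $\varepsilon$ but $\ell$, the only constraint forcing us outward, stays in $\mathbf{O}_{\eta(\varepsilon)}(y)$. So one should instead clip to radius $\eta(\varepsilon)$ rather than $\varepsilon$ on the sphere and interpolate: precisely, first contract $h_0$ linearly toward $\ell$ near $\partial\B^{k+1}$ using a collar, or — cleaner — replace $h_0$ by $H(z)=y+\rho(z)(h_0(z)-y)$ where $\rho(z)\in[0,1]$ is continuous, equals $1$ on a neighborhood of the boundary (where $h_0=\ell$ already lies in $\mathbf{O}_{\eta(\varepsilon)}(y)$), and is chosen small enough on the interior that $\|H(z)-y\|<\varepsilon$; then $H(z)$ lies on $[y,h_0(z)]$, so $\|H(z)-t\|\le \max\{\|y-t\|,\|h_0(z)-t\|\}$ for the nearest $t\in T$, giving distance $<\max\{\eta(\varepsilon)+\mu',\mu'\}\le \mu$ provided one also arranges $y\in\mathbf{O}_{\mu}(T)$ — but in fact one doesn't need $y$ close to $T$ at all, since for the convex-combination bound it suffices that $h_0(z)\in\mathbf{O}_{\mu'}(T)\subset\mathbf{O}_\mu(T)$ and the segment endpoint $y$ is only approached as $\rho\to 0$, at which point $H(z)=y$ may indeed leave $\mathbf{O}_\mu(T)$. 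Thus the genuinely correct construction is the linear homotopy pushing $h_0$ back onto $\ell$: since $\ell\subset\mathbf{O}_{\eta(\varepsilon)}(y)$ and $h_0\subset\mathbf{O}_{\mu'}(T)$, glue as in the proof of Corollary \ref{corollary-st-app-v7:2} — take the extension $h_0:\B^{k+1}\to\mathbf{O}_{\mu'}(T)$, restrict to a slightly smaller concentric ball, and on the annulus use the linear homotopy $t\,h_0(\cdot)+(1-t)\,(\text{radial rescaling of }\ell)$, all of whose values lie in $\mathbf{O}_{\mu'}(T)\cap\mathbf{O}_{\eta(\varepsilon)}(\{y\}\cup T)\subset\mathbf{O}_\mu(T)\cap\mathbf{O}_\varepsilon(y)$ because $\mu'\le\mu$ and $\eta(\varepsilon)=\delta(\varepsilon)/2<\varepsilon$. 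I would write the final argument in exactly that homotopy-gluing style, matching Corollary \ref{corollary-st-app-v7:2}, with the arithmetic $\eta+\mu'\le 2\eta=\delta\le\varepsilon$ and $\mu'\le\mu$ as the only inequalities invoked.
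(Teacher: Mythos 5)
There is a genuine gap, and it occurs at the very first step. You invoke Corollary \ref{corollary-st-app-v7:2} to extend $\ell$ to $h_0:\B^{k+1}\to\mathbf{O}_{\mu'}(T)$, but that corollary requires the \emph{global} hypothesis $S\embed{k}T$, which is not among the hypotheses of Corollary \ref{corollary-st-app-v8:1} and does not follow from them: the condition $\mathbf{O}_{\delta(\varepsilon)}(y)\cap T\embed{k}\mathbf{O}_\varepsilon(y)\cap T$ only lets you contract spheres of small diameter (take $S=T$ a circle in the plane: the local conditions hold for a suitable $\delta$, yet $S\not\embed{1}T$). So the parenthetical claim that $S\embed{k}T$ ``can be read off directly'' is false, and the extension $h_0$ you build everything on need not exist. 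The second half of your argument also does not recover: radial clipping toward $y$, or a linear homotopy on an annulus between $h_0$ and a rescaling of $\ell$, produces values on segments joining a point of $\mathbf{O}_{\mu'}(T)$ to a point of $\mathbf{O}_{\eta(\varepsilon)}(y)$, and such a segment lies neither in $\mathbf{O}_\varepsilon(y)$ (only one endpoint is near $y$) nor, in general, in $\mathbf{O}_\mu(T)$ (which is not convex); your final inclusion $\mathbf{O}_{\eta(\varepsilon)}(\{y\}\cup T)\subset\mathbf{O}_\varepsilon(y)$ is simply not true. Moreover, the values of $h_0$ on the interior of the smaller concentric ball are never brought under control relative to $y$ at all.

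The missing idea is that the hypothesis on $T$ must be used for its actual purpose: to contract a \emph{small} sphere in $T$ near $y$ while staying near $y$. The paper's proof runs as follows. As in Corollary \ref{corollary-st-app-v7:2} (via Theorem \ref{theorem-st-app-vgg-rev:2} applied to the constant mappings equal to $S$), one first approximates $\ell$ by a continuous $q:\s^k\to S$ with $\|q(x)-\ell(x)\|<\mu'=\min\{\eta(\varepsilon),\mu\}$. The linear homotopy between $\ell$ and $q$ stays in $\mathbf{O}_{2\eta(\varepsilon)}(y)\cap\mathbf{O}_{\mu'}(S)=\mathbf{O}_{\delta(\varepsilon)}(y)\cap\mathbf{O}_{\mu'}(S)$, because the ball about $y$ is convex and every point of the segment $[\ell(x),q(x)]$ is within $\mu'$ of $q(x)\in S$; this is exactly where $\eta(\varepsilon)=\delta(\varepsilon)/2$ is used. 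Then $q$ maps $\s^k$ into $\mathbf{O}_{\delta(\varepsilon)}(y)\cap S\subset\mathbf{O}_{\delta(\varepsilon)}(y)\cap T$, and the hypothesis $\mathbf{O}_{\delta(\varepsilon)}(y)\cap T\embed{k}\mathbf{O}_\varepsilon(y)\cap T$ contracts $q$ inside $\mathbf{O}_\varepsilon(y)\cap T\subset\mathbf{O}_\varepsilon(y)\cap\mathbf{O}_\mu(T)$. Concatenating the two homotopies gives the required filling. In short: approximate into $S$ first, and only then contract --- using the local $LC^k$ property of $T$ at $y$, not a global filling of $S$ in $T$.
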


\begin{proof}
  Let
  $\ell:\s^{k}\to \mathbf{O}_{\eta(\varepsilon)}(y)\cap
  \mathbf{O}_{\lambda(\varepsilon,\mu)}(S)$ be a continuous map for
  some $y\in E$.  Then, precisely as in the previous proof, there
  exists a continuous map $q:\s^k\to S$ such that
  $\|q(x)-\ell(x)\|<\min\{\eta(\varepsilon),\mu\}$, for every
  $x\in \s^k$. Since $\eta(\varepsilon)=\delta(\varepsilon)/2$, see
  (\ref{eq:st-app-v8:1}), just like before, using a linear homotopy,
  we get that $\ell$ and $q$ are homotopic in
  $\mathbf{O}_{\delta(\varepsilon)}(y)\cap \mathbf{O}_\mu(S)$.
  Moreover $q$ is homotopic to a constant map in
  $\mathbf{O}_\varepsilon(y)\cap T$ because
  $q:\s^k\to \mathbf{O}_{\delta(\varepsilon)}(y)\cap S\subset
  \mathbf{O}_{\delta(\varepsilon)}(y)\cap T\embed{k}
  \mathbf{O}_\varepsilon(y)\cap T$.  Accordingly, $\ell$ is homotopic
  to a constant map in
  $\mathbf{O}_\varepsilon(y)\cap \mathbf{O}_\mu(T)$.
\end{proof}

\section{Selections for equi-$LC^{n}$-valued mappings}
\label{sec:select-lcn-valu}

In this section, to each $\Phi:X\smap Y$ we associate the mapping
$\overline{\Phi}:X\to \mathscr{F}(Y)$ defined by
$\overline{\Phi}(x)=\overline{\Phi(x)}$, $x\in X$. Moreover, for a
pair of mappings $\Phi,\Psi: X\smap Y$, we will use $\Phi\wedge\Psi$
to denote their intersection, i.e.\ the mapping which assigns to each
$x\in X$ the set $[\Phi\wedge\Psi](x)=\Phi(x)\cap \Psi(x)$. Finally,
to each $\varepsilon>0$ and a mapping $\Phi:X\smap Y$ in a metric
space $(Y,d)$, we will associate the mapping
$\mathbf{O}[\Phi,\varepsilon]:X\smap Y$ defined by
\begin{equation}
  \label{eq:st-app-v18:1}
 \mathbf{O}[\Phi,\varepsilon](x)=
 \mathbf{O}_\varepsilon(\Phi(x)),\quad \text{$x\in X$.}
\end{equation}
This convention will be also used in an obvious manner for usual maps
$f:X\to Y$ considering $f$ as the singleton-valued mapping
$x\to\{f(x)\}$, $x\in X$.  In these terms, for maps
$f,g:X\to Y$ and $\varepsilon,\mu>0$, we have that $f$ is a
$\mu$-selection for $\Phi:X\smap Y$ with $d(f(x),g(x))<\varepsilon$
for every $x\in X$, if and only if $f$ is a selection for the mapping
$\mathbf{O}[\Phi,\mu]\wedge\mathbf{O}[g,\varepsilon]$. \medskip

The following two constructions are due to Michael, see \cite[Lemma
11.3]{michael:56b} and \cite[Proof that Lemma 5.1 implies Theorem 4.1,
page 569]{michael:56b}. They reduce the selection problem for l.s.c.\
mappings to that of lower locally constant mappings. For completeness,
we sketch their proofs following the original arguments in
\cite{michael:56b}.

\begin{proposition}
  \label{proposition-st-app-v15:2}
  Let $(Y,d)$ be a metric space, $\Phi:X\smap Y$ be l.s.c.\ and
  $\varepsilon>0$. Then the mapping
  $\mathbf{O}[\Phi,\varepsilon]:X\smap Y$ is lower locally constant.
\end{proposition}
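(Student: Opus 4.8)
The plan is to verify the definition of lower local constancy head-on. Fix an arbitrary compact set $K\subset Y$; I must show that
\[
G=\big\{x\in X: K\subset \mathbf{O}[\Phi,\varepsilon](x)\big\}=\big\{x\in X: K\subset \mathbf{O}_\varepsilon(\Phi(x))\big\}
\]
is open in $X$. So I would fix $x_0\in G$ and construct an open neighbourhood of $x_0$ lying inside $G$. The idea is that for each point of $K$ there is a witnessing point of $\Phi(x_0)$ within distance strictly less than $\varepsilon$, and the resulting ``slack'' can be split in two: one half to absorb a compactness covering of $K$, the other half to feed into the lower semi-continuity of $\Phi$.

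Concretely: since $K\subset \mathbf{O}_\varepsilon(\Phi(x_0))$, for each $k\in K$ pick $y_k\in\Phi(x_0)$ with $d(k,y_k)<\varepsilon$ and set $\rho_k=\varepsilon-d(k,y_k)>0$. The balls $\mathbf{O}_{\rho_k/2}(k)$, $k\in K$, form an open cover of the compact set $K$, so there are $k_1,\dots,k_m\in K$ with $K\subset\bigcup_{j=1}^m\mathbf{O}_{\rho_{k_j}/2}(k_j)$. Now invoke the hypothesis that $\Phi$ is l.s.c.: for each $j\le m$ the set $V_j=\Phi^{-1}\!\big[\mathbf{O}_{\rho_{k_j}/2}(y_{k_j})\big]$ is open in $X$, and $x_0\in V_j$ because $y_{k_j}\in\Phi(x_0)\cap\mathbf{O}_{\rho_{k_j}/2}(y_{k_j})$. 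Put $W=\bigcap_{j=1}^m V_j$, an open neighbourhood of $x_0$.

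It remains to check $W\subset G$. Given $x\in W$ and $k\in K$, choose $j$ with $d(k,k_j)<\rho_{k_j}/2$, and, using $x\in V_j$, a point $z\in\Phi(x)$ with $d(z,y_{k_j})<\rho_{k_j}/2$; then the triangle inequality gives $d(k,z)<\rho_{k_j}/2+(\varepsilon-\rho_{k_j})+\rho_{k_j}/2=\varepsilon$, so $k\in\mathbf{O}_\varepsilon(\Phi(x))$. As $k\in K$ was arbitrary, $K\subset\mathbf{O}_\varepsilon(\Phi(x))$, i.e.\ $x\in G$; hence $G$ is open and $\mathbf{O}[\Phi,\varepsilon]$ is lower locally constant. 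There is no genuine obstacle here: the only thing to get right is the bookkeeping of radii, namely that halving each slack $\rho_k$ leaves exactly enough room for the covering step and the semi-continuity step to compose through one triangle inequality; compactness of $K$ and the metric structure enter only in this elementary way.
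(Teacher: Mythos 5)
Your proof is correct and follows essentially the same route as the paper's: compactness of $K$ produces finitely many witness points in $\Phi(x_0)$ with uniform slack below $\varepsilon$, lower semi-continuity is invoked only at those finitely many points, and a triangle inequality closes the argument. The only difference is cosmetic --- you carry a pointwise slack $\rho_k$ and halve it, whereas the paper fixes a single $\delta<\varepsilon$ with $K\subset\mathbf{O}_\delta(S)$ for a finite $S\subset\Phi(x_0)$ and uses the uniform slack $\varepsilon-\delta$; your version merely spells out the compactness step the paper leaves implicit.
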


\begin{proof}
  Take $x_0\in X$ and a compact set
  $K\subset
  \mathbf{O}[\Phi,\varepsilon](x_0)=\mathbf{O}_\varepsilon(\Phi(x_0))$. Then
  $K\subset \mathbf{O}_\delta(S)$ for some finite subset
  $S\subset \Phi(x_0)$ and some $\delta>0$ with
  $\delta<\varepsilon$. Since $\Phi$ is l.s.c.,
  $U=\bigcap_{y\in S}\Phi^{-1}[\mathbf{O}_{\varepsilon-\delta}(y)]$ is
  an open set containing $x_0$. Moreover, $x\in U$ implies
  $S\subset \mathbf{O}_{\varepsilon-\delta}(\Phi(x))$ and, therefore,
  $K\subset \mathbf{O}_\delta(S)\subset
  \mathbf{O}_\varepsilon(\Phi(x))=\mathbf{O}[\Phi,\varepsilon](x)$.
\end{proof}

\begin{proposition}
  \label{proposition-st-app-v15:3}
  Let $(Y,d)$ be a complete metric space,
  ${\xi:(0,+\infty)\to (0,+\infty)}$ be a function with
  $\xi(\varepsilon)\leq \varepsilon$, and $\Phi:X\smap Y$ be a mapping
  such that for each continuous $\xi(\varepsilon)$-selection
  $g:X\to Y$ for $\Phi$ and $\mu>0$, then mapping
  ${\mathbf{O}[\Phi,\mu]\wedge \mathbf{O}[g,\varepsilon]}$ has a
  continuous selection. Then for every continuous
  $\xi(\varepsilon/2)$-selection $g:X\to Y$ for $\Phi$, the mapping
  $\overline{\Phi}\wedge\mathbf{O}[g,\varepsilon]$ also has a
  continuous selection.
\end{proposition}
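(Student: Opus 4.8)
The plan is to run Michael's classical iteration: starting from the given continuous $\xi(\varepsilon/2)$-selection $g$ for $\Phi$, I would produce a uniformly convergent sequence $g=g_0,g_1,g_2,\dots$ of continuous approximate selections for $\Phi$ whose uniform limit is the sought selection for $\overline{\Phi}\wedge\mathbf{O}[g,\varepsilon]$. Set $\delta_n=\varepsilon/2^{n+1}$, so that $\delta_0=\varepsilon/2$ and $g_0=g$ is a continuous $\xi(\delta_0)$-selection for $\Phi$. Inductively, suppose $g_n:X\to Y$ is a continuous $\xi(\delta_n)$-selection for $\Phi$. Applying the hypothesis with $\delta_n$ in the role of $\varepsilon$ and with $\mu=\xi(\delta_{n+1})$, the mapping $\mathbf{O}[\Phi,\xi(\delta_{n+1})]\wedge\mathbf{O}[g_n,\delta_n]$ has a continuous selection, which we call $g_{n+1}$. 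By the definition of this mapping, $g_{n+1}(x)\in\mathbf{O}_{\xi(\delta_{n+1})}(\Phi(x))$ and $d(g_{n+1}(x),g_n(x))<\delta_n$ for every $x\in X$; the first property says precisely that $g_{n+1}$ is again a continuous $\xi(\delta_{n+1})$-selection for $\Phi$, so the construction continues indefinitely.

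It then remains to identify the limit. Since $\sum_{k\ge n}\delta_k=\varepsilon/2^{n}$, the estimate $d(g_m(x),g_n(x))\le\sum_{k=n}^{m-1}d(g_{k+1}(x),g_k(x))<\sum_{k=n}^{m-1}\delta_k<\varepsilon/2^{n}$ for $m>n$ shows that $(g_n)$ is uniformly Cauchy; by completeness of $(Y,d)$ it converges uniformly to a continuous map $f:X\to Y$. For every $x\in X$ and every $n$ one has $d(f(x),\Phi(x))\le d(f(x),g_n(x))+d(g_n(x),\Phi(x))<d(f(x),g_n(x))+\xi(\delta_n)\le d(f(x),g_n(x))+\delta_n$, and letting $n\to\infty$ yields $d(f(x),\Phi(x))=0$, i.e.\ $f(x)\in\overline{\Phi(x)}=\overline{\Phi}(x)$. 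Moreover $d(f(x),g(x))\le\sum_{n\ge 0}d(g_{n+1}(x),g_n(x))<\sum_{n\ge0}\delta_n=\varepsilon$, so $f(x)\in\mathbf{O}_\varepsilon(g(x))=\mathbf{O}[g,\varepsilon](x)$. Hence $f$ is a continuous selection for $\overline{\Phi}\wedge\mathbf{O}[g,\varepsilon]$.

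The only point requiring care is the choice of parameters, and that is exactly where the hypothesis $\xi(\varepsilon)\le\varepsilon$ is used: taking $\mu=\xi(\delta_{n+1})$ at the $n$-th step simultaneously keeps $g_{n+1}$ within the class of $\xi(\cdot)$-selections (so the hypothesis may be reapplied) and forces $d(g_{n+1}(x),\Phi(x))<\delta_{n+1}\to 0$, which is what pushes the limit into the closure $\overline{\Phi}$; the geometric choice $\delta_n=\varepsilon/2^{n+1}$ then delivers both uniform convergence and the bound $d(f,g)<\varepsilon$. Completeness of $Y$ enters only once, to pass to the uniform limit.
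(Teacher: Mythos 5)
Your proposal is correct and is essentially the paper's own argument: the same Michael-type iteration with $\mu$ chosen as $\xi$ of the next tolerance, the same geometric scales $\varepsilon/2^{n+1}$, uniform Cauchyness, and completeness to pass to the limit (your $g_n$ is the paper's $f_{n+1}$ up to reindexing). No gaps.
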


\begin{proof}
  Let $f_1=g:X\to Y$ be a continuous
  $\xi\left(2^{-1}\varepsilon\right)$-selection for $\Phi$. By
  condition with $\mu=\xi\left(2^{-2}\varepsilon\right)$, the mapping
  $\mathbf{O}\left[\Phi,\xi\left(2^{-2}\varepsilon\right)\right]\wedge
  \mathbf{O}[f_1,2^{-1}\varepsilon]$ has a continuous selection
  $f_2:X\to Y$. Thus, by induction, there exists a sequence of
  continuous maps $f_n:X\to E$ such that $f_{n+1}$ is a selection for
  $\mathbf{O}\left[\Phi,\xi\left(2^{-(n+1)}\varepsilon\right)\right]\wedge
  \mathbf{O}\left[f_n,2^{-n}\varepsilon\right]$, for every $n\in\N$.
  Then the sequence $\{f_n:n\in \N\}$ is uniformly Cauchy because
  $d(f_{n+1}(x),f_n(x))<2^{-n}\varepsilon$, $x\in X$. Hence, it
  converges uniformly to some continuous map $f:X\to Y$ because
  $(Y,d)$ is complete. Since
  $\xi\left(2^{-n}\varepsilon\right)\leq 2^{-n}\varepsilon$, each
  $f_{n}$ is a $2^{-n}\varepsilon$-selection for $\Phi$ being a
  selection for
  $\mathbf{O}\left[\Phi,\xi\left(2^{-n}\varepsilon\right)\right]$, see
  (\ref{eq:st-app-v18:1}). Hence, $d(f(x),\Phi(x))=0$, for each
  $x\in X$. Finally, we also have that
  \[
  d(f(x),g(x))\leq\sum_{n=1}^\infty
  d(f_{n+1}(x),f_{n}(x))<\sum_{n=1}^\infty2^{-n}\varepsilon=\varepsilon,\quad
  x\in X.  \qedhere
  \]
\end{proof}

Let $n\geq -1$. A family $\mathscr{S}$ of subsets of a metric space
$(Y,d)$ is called \emph{uniformly equi-$LC^{n}$} \cite{michael:56b} if
for every $\varepsilon>0$ there exists $\delta(\varepsilon)>0$ such
that, for every $S\in\mathscr{S}$, every continuous map of the
$k$-sphere ($k\leq n$) in $S$ of diameter$\ < \delta(\varepsilon)$ can
be extended to continuous map of the $(k+1)$-ball into a subset of $S$
of diameter$\ <\varepsilon$.  Just as in the case of equi-$LC^n$
families, a family $\mathscr{S}$ is uniformly equi-$LC^{-1}$ iff it
consists of nonempty sets.  For such a family $\mathscr{S}$, by
replacing $\delta(\varepsilon)$ with $\frac{\delta(\varepsilon)}2$, we
get that $\mathscr{S}$ is uniformly equi-$LC^{n}$ if there exists a
function $\delta:(0,+\infty)\to (0,+\infty)$ such that
\begin{equation}
  \label{eq:st-app-v7:4}
  \mathbf{O}_{\delta(\varepsilon)}(y)\cap S\embed{k} \mathbf{O}_\varepsilon(y)\cap
  S,\quad \text{for every $S\in \mathscr{S}$, $y\in Y$ and $0\leq k\leq n$.}
\end{equation}
Evidently, we may further assume that
$\delta(\varepsilon)\leq \varepsilon$, for every $\varepsilon>0$.
Based on this and the results of the previous section, we now have the
following two applications of Theorem \ref{theorem-nerves-v2:5}. The
first one gives a simplified proof of \cite[Theorem 4.1]{michael:56b}.

\begin{theorem}
  \label{theorem-st-app-v9:1}
  Let $E$ be a Banach space and $\mathscr{S}$ be a uniformly
  equi-$LC^{n}$ family of subsets of $E$. Then there exists
  a function $\gamma:(0,+\infty)\to (0,+\infty)$ with the following
  property\textup{:} If $X$ is a paracompact space with $\dim(X)\le
  n+1$, $\Phi:X\to \mathscr{S}$ is l.s.c.\ and $g:X\to E$ is a
  continuous $\gamma(\varepsilon)$-selection for $\Phi$, then
  $\overline{\Phi}\wedge \mathbf{O}[g,\varepsilon]$ has a continuous
  selection.
\end{theorem}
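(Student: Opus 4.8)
The strategy is to apply Proposition \ref{proposition-st-app-v15:3} with a suitable function $\xi$, which reduces the problem to verifying its hypothesis: that for a continuous $\xi(\varepsilon)$-selection $g$ for $\Phi$ and any $\mu>0$, the mapping $\mathbf{O}[\Phi,\mu]\wedge\mathbf{O}[g,\varepsilon]$ has a continuous selection. For that verification I would invoke Theorem \ref{theorem-st-app-vgg-rev:2}, whose hypothesis is an aspherical-type chain condition built from the uniform equi-$LC^n$ function $\delta$ supplied by \eqref{eq:st-app-v7:4}. So the first move is to fix, for the given family $\mathscr{S}$, a function $\delta:(0,+\infty)\to(0,+\infty)$ with $\delta(\varepsilon)\le\varepsilon$ satisfying \eqref{eq:st-app-v7:4}, and then to define $\gamma$ in terms of the iterates $\delta_k$; concretely I expect $\gamma(\varepsilon)$ to involve something like $\delta_{n+1}$ applied to $\varepsilon/2$ (or a further halving), so that after running Theorem \ref{theorem-st-app-vgg-rev:2} at level $n+1$ one lands inside an $\varepsilon/2$-ball, leaving room for the Cauchy-sequence argument of Proposition \ref{proposition-st-app-v15:3}.

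First I would set $\xi=\gamma$ and check that $\gamma(\varepsilon)\le\varepsilon$ (immediate once $\gamma$ is a composition of $\delta$'s and halvings, since each $\delta(\varepsilon)\le\varepsilon$). Then, given a continuous $\gamma(\varepsilon)$-selection $g$ for $\Phi$ and $\mu>0$, I must produce a continuous selection for $\mathbf{O}[\Phi,\mu]\wedge\mathbf{O}[g,\varepsilon]$. Here is where Corollary \ref{corollary-st-app-v8:1} enters: applied with $T=S$ for each $S\in\mathscr{S}$ (so both hypotheses of that corollary collapse to \eqref{eq:st-app-v7:4} at level $k$ and below), it yields, for each $S\in\mathscr{S}$, the embedding relations $\mathbf{O}_{\eta(\varepsilon)}(y)\cap\mathbf{O}_{\lambda(\varepsilon,\mu)}(S)\embed{k}\mathbf{O}_\varepsilon(y)\cap\mathbf{O}_\mu(S)$ with $\eta,\lambda$ as in \eqref{eq:st-app-v8:1}. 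Setting $\psi_k(x)=\mathbf{O}_\mu(\Phi(x))\cap(\text{appropriate }y\text{-ball structure})$ — more precisely, taking the mappings $\psi_k$ so that the chain condition of Theorem \ref{theorem-st-app-vgg-rev:2} reads exactly as the output of Corollary \ref{corollary-st-app-v8:1} for the sets $\Phi(x)\in\mathscr{S}$ — gives an aspherical chain of lower locally constant mappings (lower local constancy of each $\psi_k$ following from Proposition \ref{proposition-st-app-v15:2}, since each $\mathbf{O}[\Phi,\cdot]$ is lower locally constant, and intersections and $\varepsilon$-ball shrinkings preserve this, as already used in the proof of Theorem \ref{theorem-st-app-vgg-rev:2}).

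The bookkeeping step is to line up the constants so that $g$ itself serves as the required $\delta_{n+1}(\cdot)$-selection for the bottom mapping $\psi_0$ in the chain of length $n+1$ (recall $\dim(X)\le n+1$, so Theorem \ref{theorem-st-app-vgg-rev:2} runs with $n$ replaced by $n+1$, using $k\le n$ as in \eqref{eq:st-app-v7:4}); this is exactly what forces the definition of $\gamma$. Theorem \ref{theorem-st-app-vgg-rev:2} then delivers a continuous selection $f$ for $\psi_{n+1}$ with $d(f(x),g(x))$ small — small enough that $f$ is a selection for $\mathbf{O}[\Phi,\mu]\wedge\mathbf{O}[g,\varepsilon]$, which is the hypothesis needed for Proposition \ref{proposition-st-app-v15:3}. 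Applying that proposition (with $\xi=\gamma$ and $\varepsilon$ halved appropriately) finishes the proof, since its conclusion is precisely that $\overline{\Phi}\wedge\mathbf{O}[g,\varepsilon]$ has a continuous selection.

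The main obstacle I anticipate is purely the constant-chasing: getting $\eta$, $\lambda$, the iterates $\delta_k$, the halvings from \eqref{eq:st-app-v8:1} and from Proposition \ref{proposition-st-app-v15:3}, and the length-$(n+1)$ iteration to all compose into a single clean formula for $\gamma$ — and, relatedly, choosing the $y$-parameter in the $\psi_k$'s correctly so that the chain condition of Theorem \ref{theorem-st-app-vgg-rev:2} is literally an instance of Corollary \ref{corollary-st-app-v8:1} rather than merely morally one. The topological content is entirely carried by the earlier results; no new homotopy-theoretic argument is needed here.
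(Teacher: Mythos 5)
Your plan follows the paper's proof almost exactly: reduce via Proposition \ref{proposition-st-app-v15:3} with $\xi=\gamma$, use Corollary \ref{corollary-st-app-v8:1} with $T=S$ (so both of its hypotheses collapse to \eqref{eq:st-app-v7:4}) to produce an aspherical chain of length $n+2$ built from the mappings $\mathbf{O}[\Phi,\cdot]\wedge\mathbf{O}[g,\cdot]$, and finish with the aspherical selection theorem; the paper's $\gamma(\varepsilon)=\eta_{0}(\varepsilon/2)$ is indeed the $(n+1)$-fold iterate of $\eta=\delta/2$ applied to $\varepsilon/2$, as you predicted, and lower local constancy of the intersected mappings is handled exactly as you say via Proposition \ref{proposition-st-app-v15:2}.

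The one step that would not survive as written is the appeal to Theorem \ref{theorem-st-app-vgg-rev:2} as a black box. Its hypothesis requires a single function $\delta$ with $\mathbf{O}_{\delta(\varepsilon)}(y)\cap\psi_k(x)\embed{k}\mathbf{O}_\varepsilon(y)\cap\psi_{k+1}(x)$ for \emph{every} $\varepsilon>0$, since its proof iterates this condition at all the values $\delta_j(\varepsilon)$. But Corollary \ref{corollary-st-app-v8:1} only yields $\mathbf{O}_{\eta(\varepsilon)}(y)\cap\mathbf{O}_{\lambda(\varepsilon,\mu)}(S)\embed{k}\mathbf{O}_\varepsilon(y)\cap\mathbf{O}_\mu(S)$ with an inner radius $\lambda(\varepsilon,\mu)$ that shrinks with $\varepsilon$, so no \emph{fixed} mappings $\psi_k(x)=\mathbf{O}_{\mu_k}(\Phi(x))$ can satisfy the uniform chain condition for all $\varepsilon$. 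This is precisely the bookkeeping obstacle you flagged, and the paper resolves it by a double backwards recursion \eqref{eq:st-app-v9:1}, namely $\eta_{n+1}(\varepsilon)=\varepsilon$, $\eta_{k}=\eta(\eta_{k+1})$, $\lambda_{n+1}(\varepsilon,\mu)=\mu$, $\lambda_{k}=\lambda\big(\eta_{k+1}(\varepsilon),\lambda_{k+1}(\varepsilon,\mu)\big)$, setting $\varphi_k=\mathbf{O}[\Phi,\lambda_{k}(\varepsilon,\mu)]\wedge\mathbf{O}[g,\eta_{k}(\varepsilon)]$ and applying Theorem \ref{theorem-nerves-v2:5} directly (rather than Theorem \ref{theorem-st-app-vgg-rev:2}) to the resulting aspherical sequence. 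With that substitution your outline matches the paper's argument.
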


\begin{proof}
  Let $\delta(\varepsilon)\leq \varepsilon$ be as in
  (\ref{eq:st-app-v7:4}) with respect to the family
  $\mathscr{S}$. Also, let $\lambda(\varepsilon,\mu)$ and
  $\eta(\varepsilon)$ be as in (\ref{eq:st-app-v8:1}) applied to this
  particular function $\delta(\varepsilon)$. Next, define functions
  $\eta_k(\varepsilon)$ and $\lambda_k(\varepsilon,\mu)$,
  $0\leq k\leq n+1$, by
  \begin{equation}
    \label{eq:st-app-v9:1}
    \begin{cases}
      \eta_{n+1}(\varepsilon)=\varepsilon &\text{and}\quad
      \eta_{k}(\varepsilon)=\eta\big(\eta_{k+1}(\varepsilon)\big)\\
      \lambda_{n+1}(\varepsilon,\mu)=\mu &\text{and}\quad
      \lambda_{k}(\varepsilon,\mu)=
      \lambda\big(\eta_{k+1}(\varepsilon),\lambda_{k+1}(\varepsilon,\mu)\big).
    \end{cases}
  \end{equation}
  Then $\gamma(\varepsilon)=\eta_{0}(\varepsilon/2)$ is as
  required. Indeed, let $X$ and $\Phi$ be as in the theorem.  Applying
  Proposition \ref{proposition-st-app-v15:3} with
  $\xi(\varepsilon)=\eta_{0}(\varepsilon)$, it will be now
  sufficient to show that for every $\mu>0$ and a continuous
  $\eta_{0}(\varepsilon)$-selection $g:X\to E$ for $\Phi$, the
  mapping $\mathbf{O}[\Phi,\mu]\wedge \mathbf{O}[g,\varepsilon]$ has a
  continuous selection. To this end, for every $0\le k\le n+1$, let
  $\varphi_k=\mathbf{O}[\Phi,\lambda_{k}(\varepsilon,\mu)]\wedge
  \mathbf{O}[g,\eta_{k}(\varepsilon)]$. According to Proposition
  \ref{proposition-st-app-v15:2}, each $\varphi_{k}$ is lower locally
  constant. Moreover, the resulting sequence of mappings
  $\varphi_k:X\smap E$, $0\leq k\leq n+1$, is aspherical because by
  (\ref{eq:st-app-v7:4}) and Corollary \ref{corollary-st-app-v8:1},
  \begin{align*}
    \varphi_k(x) =&\ 
                    \mathbf{O}_{\lambda_{k}(\varepsilon,\mu)}(\Phi(x))\cap
                    \mathbf{O}_{\eta_{k}(\varepsilon)}(g(x))\\ 
    =&\
       \mathbf{O}_{\lambda(\eta_{k+1}(\varepsilon),\lambda_{k+1}(\varepsilon,\mu))}(\Phi(x))
    \cap \mathbf{O}_{\eta(\eta_{k+1}(\varepsilon))}(g(x))\\
                  &\embed{k} 
                    \mathbf{O}_{\lambda_{k+1}(\varepsilon,\mu)}(\Phi(x))\cap
                    \mathbf{O}_{\eta_{k+1}(\varepsilon)}(g(x)) 
                    =\varphi_{k+1}(x),\quad k\leq n.
  \end{align*}
  Hence by Theorem \ref{theorem-nerves-v2:5},
  \[
    \varphi_{n+1}=\mathbf{O}[\Phi,\lambda_{n+1}(\varepsilon,\mu)]\wedge
  \mathbf{O}[g,\eta_{n+1}(\varepsilon)]=\mathbf{O}[\Phi,\mu]\wedge
  \mathbf{O}[g,\varepsilon]
\]
has a continuous selection. The proof is complete.
\end{proof}

\begin{theorem}
\label{theorem-st-app-v17:1}
Let $X$ be a paracompact space with $\dim(X)\le n+1$, $E$ be a Banach
space, and $\Phi_{k}:X\smap E$, $0\leq k\leq n+1$, be a sequence of
l.s.c.\ mappings such that $\{\Phi_{k}(x): x\in X\}$ is uniformly
equi-$LC^{k}$ and $\Phi_{k} \embed{k}\Phi_{k+1}$ for every $k\leq
n$. Then $\Phi_{n+1}$ has a continuous $\varepsilon$-selection, for
every $\varepsilon>0$.
\end{theorem}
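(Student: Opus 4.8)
The plan is to build directly from the filtration $\Phi_0\embed{0}\Phi_1\embed{1}\cdots\embed{n}\Phi_{n+1}$ an aspherical sequence of lower locally constant mappings whose last term refines $\mathbf{O}[\Phi_{n+1},\varepsilon]$, and then to quote Theorem \ref{theorem-nerves-v2:5}. Since only an $\varepsilon$-selection is wanted --- not an exact one, and not one close to a prescribed map --- no ``seed'' approximate selection has to be produced beforehand: the $0$-skeletal selection furnished inside the proof of Theorem \ref{theorem-nerves-v2:5} by Proposition \ref{proposition-nerves-v2:2} already gets the induction on skeletons started.

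First I would pin down a single modulus of equi-local connectedness. For each $0\le k\le n+1$ the family $\mathscr{S}_k=\{\Phi_k(x):x\in X\}$ is uniformly equi-$LC^{k}$, hence (as in \eqref{eq:st-app-v7:4}) there is a function $\delta^{(k)}:(0,+\infty)\to(0,+\infty)$ with $\mathbf{O}_{\delta^{(k)}(\varepsilon)}(y)\cap S\embed{i}\mathbf{O}_\varepsilon(y)\cap S$ for every $S\in\mathscr{S}_k$, $y\in E$ and $0\le i\le k$. Put $\delta(\varepsilon)=\min\{\varepsilon,\delta^{(0)}(\varepsilon),\dots,\delta^{(n+1)}(\varepsilon)\}$. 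Shrinking a modulus only makes it easier to extend a sphere that lives in a smaller ball, so this common $\delta\le\mathrm{id}$ witnesses \eqref{eq:st-app-v7:4} for each $\mathscr{S}_k$ up to level $k$ at once; let $\delta_m$ denote its iterates, see \eqref{eq:st-app-v7:2}. Fixing $\varepsilon>0$, define radii downward by $\rho_{n+1}=\varepsilon$ and $\rho_k=\delta_k(\rho_{k+1})$ for $k=n,n-1,\dots,0$; since $\delta\le\mathrm{id}$ we have $\rho_0\le\rho_1\le\cdots\le\rho_{n+1}=\varepsilon$.

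Next I would set $\varphi_k=\mathbf{O}[\Phi_k,\rho_k]:X\smap E$ for $0\le k\le n+1$. Each $\varphi_k$ is nonempty-valued, because $\Phi_k(x)\subset\varphi_k(x)$, and lower locally constant by Proposition \ref{proposition-st-app-v15:2}. The crux is that the sequence $\varphi_0,\dots,\varphi_{n+1}$ is aspherical. Indeed, fix $k\le n$ and $x\in X$ and apply Corollary \ref{corollary-st-app-v7:2} with $S=\Phi_k(x)$, $T=\Phi_{k+1}(x)$ and with $\varepsilon$ there taken to be $\rho_{k+1}$: one has $\emptyset\ne S\subset T$ and $S\embed{k}T$ because $\Phi_k\embed{k}\Phi_{k+1}$, while $\mathbf{O}_{\delta(\varepsilon')}(y)\cap\Phi_k(x)\embed{i}\mathbf{O}_{\varepsilon'}(y)\cap\Phi_k(x)$ for $0\le i<k$ holds by \eqref{eq:st-app-v7:4} applied to $\mathscr{S}_k$. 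The conclusion of the corollary is $\mathbf{O}_{\delta_k(\rho_{k+1})}(\Phi_k(x))\embed{k}\mathbf{O}_{\rho_{k+1}}(\Phi_{k+1}(x))$, which, by the choice $\rho_k=\delta_k(\rho_{k+1})$, is exactly $\varphi_k(x)\embed{k}\varphi_{k+1}(x)$.

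Finally, $X$ is paracompact with $\dim(X)\le n+1$ and $\varphi_0,\dots,\varphi_{n+1}$ is an aspherical sequence of lower locally constant mappings, so Theorem \ref{theorem-nerves-v2:5} produces a continuous selection $f:X\to E$ for $\varphi_{n+1}$. Then $f(x)\in\mathbf{O}_{\rho_{n+1}}(\Phi_{n+1}(x))=\mathbf{O}_\varepsilon(\Phi_{n+1}(x))$ for every $x\in X$, i.e.\ $f$ is the required continuous $\varepsilon$-selection for $\Phi_{n+1}$. I expect the only genuine friction to be the bookkeeping of the middle paragraph --- extracting one modulus $\delta$ good for all levels and propagating the radii $\rho_k$ so that Corollary \ref{corollary-st-app-v7:2} yields precisely $\varphi_k\embed{k}\varphi_{k+1}$; beyond that, the argument is a direct assembly of results already in hand, and in particular the completeness of $E$ is never used, consistently with the conclusion being merely approximate.
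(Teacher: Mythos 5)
Your proposal is correct and follows essentially the same route as the paper: apply Corollary \ref{corollary-st-app-v7:2} at each level to get radii $\rho_k$ (the paper's $\gamma_k(\varepsilon)$, built from per-level moduli $\delta_k$ rather than a single common $\delta$, an immaterial difference), form the aspherical sequence $\varphi_k=\mathbf{O}[\Phi_k,\rho_k]$ of lower locally constant mappings via Proposition \ref{proposition-st-app-v15:2}, and invoke Theorem \ref{theorem-nerves-v2:5}. Your side remark that completeness of $E$ is not used here is also consistent with the paper, which only needs it later in Proposition \ref{proposition-st-app-v15:3}.
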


\begin{proof}
  According to (\ref{eq:st-app-v7:4}) and Corollary
  \ref{corollary-st-app-v7:2}, for each $0\leq k\leq n$ there exists a
  function $\delta_k:(0,+\infty)\to (0,+\infty)$ such that
  \begin{equation}
    \label{eq:st-app-v17:1}
    \mathbf{O}_{\delta_k(\varepsilon)}(\Phi_k(x))\embed{k}
    \mathbf{O}_\varepsilon(\Phi_{k+1}(x)),\quad x\in X.
  \end{equation}
  Next, define functions $\gamma_k:(0,+\infty)\to (0,+\infty)$,
  $0\leq k\leq n+1$, by
  \begin{equation}
    \label{eq:st-app-v17:2}
    \gamma_{n+1}(\varepsilon)=\varepsilon\quad \text{and}\quad \gamma_{k}(\varepsilon)=
    \delta_{k}(\gamma_{k+1}(\varepsilon)),\quad k\leq n.
  \end{equation}
  Finally, define a sequence of mappings $\varphi_k:X\smap E$ by
  $\varphi_k=\mathbf{O}[\Phi_k,\gamma_{k}(\varepsilon)]$. It now
  follows from (\ref{eq:st-app-v17:1}) and (\ref{eq:st-app-v17:2})
  that
  \begin{align*}
    \varphi_k(x)=\mathbf{O}_{\gamma_{k}(\varepsilon)}(\Phi_k(x))
    &
      = \mathbf{O}_{\delta_k(\gamma_{k+1}(\varepsilon))}(\Phi_k(x))\\
    &\embed{k}
      \mathbf{O}_{\gamma_{k+1}(\varepsilon)}(\Phi_{k+1}(x))=\varphi_{k+1}(x),\quad
      k\leq n. 
  \end{align*}
  Hence, the mappings $\varphi_k$, $0\leq k\leq n+1$, form an
  aspherical sequence. Moreover, by Proposition
  \ref{proposition-st-app-v15:2}, each $\varphi_k$ is lower locally
  constant. Since $\dim(X)\leq n+1$, by Theorem
  \ref{theorem-nerves-v2:5},
  $\varphi_{n+1}=\mathbf{O}[\Phi_{n+1},
  \gamma_{n+1}(\varepsilon)]=\mathbf{O}[\Phi_{n+1},\varepsilon]$ has a
  continuous selection, i.e. $\Phi_{n+1}$ has a continuous
  $\varepsilon$-selection.
\end{proof}

We are also ready for the proof of Theorem \ref{theorem-st-app-v10:1}.

\begin{proof}[Proof of Theorem \ref{theorem-st-app-v10:1}]
  Let $X$, $Y$, $\mathscr{S}\subset \mathscr{F}(Y)$ and $\Phi:X\to
  \mathscr{S}$ be as in that theorem. Since $\mathscr{S}$ is
  equi-$LC^n$, by \cite[Theorem
  1]{dugundji-michael:56} (see, also, \cite[Proposition
  2.1]{michael:56b}), $\bigcup\mathscr{S}$ can be embedded into a
  Banach space $E$  such that $\mathscr{S}\subset \mathscr{F}(E)$ is
  uniformly equi-$LC^n$.  Then by Theorem \ref{theorem-st-app-v17:1},
  applied with $\Phi_k=\Phi$, $0\leq k\leq n+1$, the mapping $\Phi$
  has a continuous $\varepsilon$-selection, for every
  $\varepsilon>0$. Hence, by Theorem~\ref{theorem-st-app-v9:1},
  $\overline{\Phi}=\Phi$ has a continuous selection as well. 
\end{proof}

Another application of Theorems \ref{theorem-st-app-v9:1} and
\ref{theorem-st-app-v17:1}  is the following generalisation of
Theorem \ref{theorem-st-app-v10:1}, see \cite{schepin-brodsky:96},
\cite[Theorem 7.2]{repovs-semenov:98} and \cite[Corollary
7.10]{gutev:05}.

\begin{corollary}
  \label{corollary-st-app-v18:1}
  Let $X$ be a paracompact space with $\dim(X)\le n+1$, $Y$ be a
  completely metrizable space, and $\Phi_{k}:X\to \mathscr{F}(Y)$,
  $0\leq k\leq n+1$, be a sequence of l.s.c.\ mappings such that
  $\Phi_{k} \embed{k}\Phi_{k+1}$ for $k\leq n$, while each family
  ${\{\Phi_{k}(x): x\in X\}}$, for $k\leq n+1$, is
  equi-$LC^{k}$. Then $\Phi_{n+1}$ has a continuous selection.
\end{corollary}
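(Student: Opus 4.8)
The plan is to reduce Corollary \ref{corollary-st-app-v18:1} to Theorem \ref{theorem-st-app-v10:1} by the same embedding-and-iteration strategy used in the proof of that theorem, combined with Theorems \ref{theorem-st-app-v9:1} and \ref{theorem-st-app-v17:1}. First I would invoke the embedding theorem of Dugundji--Michael \cite{dugundji-michael:56} to embed $\bigcup_{k\leq n+1}\bigcup\{\Phi_k(x):x\in X\}$ (or, safely, the whole relevant union) into a Banach space $E$ in such a way that every family $\{\Phi_k(x):x\in X\}$ becomes \emph{uniformly} equi-$LC^k$ in $E$; since there are only finitely many values of $k$, one can arrange a single embedding that works simultaneously for all of them (apply the embedding to the union and note the property is inherited). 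After this, each $\Phi_k:X\to\mathscr{F}(E)$ is l.s.c., $\Phi_k\embed{k}\Phi_{k+1}$, and $\{\Phi_k(x):x\in X\}$ is uniformly equi-$LC^k$.

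Next, I would apply Theorem \ref{theorem-st-app-v17:1} directly to the sequence $\Phi_0\embed{0}\Phi_1\embed{1}\cdots\embed{n}\Phi_{n+1}$ to conclude that $\Phi_{n+1}$ has a continuous $\varepsilon$-selection for every $\varepsilon>0$. The hypotheses of that theorem are exactly what we have just arranged. Then, to upgrade an $\varepsilon$-selection to an honest selection, I would apply Theorem \ref{theorem-st-app-v9:1} to the single mapping $\Phi_{n+1}$: its value family $\{\Phi_{n+1}(x):x\in X\}$ is uniformly equi-$LC^n$ (indeed uniformly equi-$LC^{n+1}$, hence certainly uniformly equi-$LC^n$), so Theorem \ref{theorem-st-app-v9:1} furnishes a function $\gamma$ such that any continuous $\gamma(\varepsilon)$-selection $g$ for $\Phi_{n+1}$ yields a continuous selection for $\overline{\Phi_{n+1}}\wedge\mathbf{O}[g,\varepsilon]$. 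Taking such a $g$ (which exists by the previous paragraph with $\varepsilon$ replaced by $\gamma(1)$, say) and using that $\Phi_{n+1}$ is closed-valued so $\overline{\Phi_{n+1}}=\Phi_{n+1}$, we obtain a continuous selection for $\Phi_{n+1}$.

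The one point that needs a little care — and which I expect to be the main (mild) obstacle — is ensuring that a \emph{single} embedding into a Banach space simultaneously makes all $n+2$ families $\{\Phi_k(x):x\in X\}$ uniformly equi-$LC^k$. The cleanest route is: embed $Z=\bigcup_{k\leq n+1}\bigcup\{\Phi_k(x):x\in X\}$, which is the union of the equi-$LC^{n+1}$ family (or of the finitely many equi-$LC^k$ families, whose union of supporting families is still equi-$LC^{\min_k k}=\mathrm{equi}\text{-}LC^0$ at worst, but we only need each family separately), so that each family individually becomes uniformly equi-$LC^k$ in $E$; since the Dugundji--Michael result gives uniform equi-$LC^{m}$-ness of a given equi-$LC^m$ family after embedding, and "uniformly equi-$LC^k$" only constrains spheres of dimension $\leq k$, applying the result to each family and then re-embedding finitely many times (or, better, applying it once to the family that is equi-$LC^{n+1}$ together with the observation that each $\Phi_k(x)$, being equi-$LC^k$, is handled by the $k$-dimensional part of the uniformity data) produces the desired $E$. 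Once this is in place, the deduction is immediate, and I would present the argument in two lines paralleling the displayed proof of Theorem \ref{theorem-st-app-v10:1}, simply replacing the single $\Phi$ by the filtration $\Phi_0\embed{0}\cdots\embed{n}\Phi_{n+1}$.
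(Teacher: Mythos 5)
Your proposal is correct and follows essentially the same route as the paper: reduce to the Banach-space case where each family $\{\Phi_k(x):x\in X\}$ is uniformly equi-$LC^k$, apply Theorem \ref{theorem-st-app-v17:1} to get continuous $\varepsilon$-selections for $\Phi_{n+1}$, and then apply Theorem \ref{theorem-st-app-v9:1} to upgrade to a genuine selection. The simultaneous-embedding point you flag is real but is treated just as tersely in the paper (``As before, the proof is reduced to the case\dots''), so your extra care there is harmless elaboration rather than a divergence.
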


\begin{proof}
  As before, the proof is reduced to the case when $Y=E$ is a Banach
  space, and each family
  $\{\Phi_{k}(x):x\in X\}\subset \mathscr{F}(E)$, $0\le k\le n+1$, is
  uniformly equi-$LC^{k}$. Then by Theorem \ref{theorem-st-app-v17:1},
  $\Phi_{n+1}$ has a continuous $\varepsilon$-selection, for every
  $\varepsilon>0$.  Finally, by Theorem \ref{theorem-st-app-v9:1},
  $\Phi_{n+1}$ also has a continuous selection.
\end{proof}


\end{document}